\newtheorem{Th}{Theorem}[section]
\newtheorem*{mTh}{Main Theorem}
\newtheorem{Prop}[Th]{Proposition}
\newtheorem{Lemma}[Th]{Lemma}
\newtheorem{Cor}[Th]{Corollary}
\newtheorem*{Quest*}{Question}
\newtheorem*{Th*}{Theorem}
\theoremstyle{definition} 
\newtheorem{Def}[Th]{Definition}
\newtheorem{Remark}[Th]{Remark}
\newtheorem*{Ack}{Acknowledgments}
\newcommand{\ON}{\operatorname}
\newcommand{\mult}{\operatorname{mult}}
\newcommand{\gem}{\geqslant}
\newcommand{\lem}{\leqslant}
\newcommand{\Cl}{\operatorname{Cl}}
\newcommand{\Cox}{\operatorname{Cox}}
\newcommand{\NE}{\operatorname{NE}}
\newcommand{\SL}{\operatorname{SL}}
\newcommand{\MC}{\mathbb{C}}
\newcommand{\MF}{\mathbb{F}}
\newcommand{\MP}{\mathbb{P}}
\newcommand{\MQ}{\mathbb{Q}}
\newcommand{\MZ}{\mathbb{Z}}
\newcommand{\CH}{\mathcal{H}}
\newcommand{\CM}{\mathcal{M}}
\title{birational rigidity of orbifold degree 2 del Pezzo fibrations}
\date{}
\author{Hamid Abban \and Igor Krylov}
\address{Department of Mathematical Sciences, Loughborough University, LE11 3TU, UK}
\email{h.abban@lboro.ac.uk}
\address{Korea Institute for Advanced Study, 85 Hoegiro, Dongdaemun-gu, Seoul 02455, Republic of Korea}
\email{IKrylov@kias.re.kr}
\keywords{Birational Rigidity; del Pezzo fibrations; Minimal Model Program.}
\subjclass[2010]{14E30, 14E05}
\begin{document}

\begin{abstract}
Varieties fibered into del Pezzo surfaces form a class of possible outputs of the minimal model program. It is known that del Pezzo fibrations of degrees $1$ and $2$ over the projective line  with smooth total space satisfying the so-called $K^2$-condition are birationally rigid: their Mori fibre space structure is unique. This implies that they are not birational to any Fano varieties, conic bundles or other del Pezzo fibrations. In particular, they are irrational. The families of del Pezzo fibrations with smooth total space of degree $2$ are rather special, as for ``most'' families a general del Pezzo fibration has the simplest orbifold singularities. We prove that orbifold del Pezzo fibrations of degree $2$ over the projective line satisfying explicit generality conditions as well as a generalised $K^2$-condition are birationally rigid.
\end{abstract}

\maketitle

\section{Introduction}

Birational classification of complex algebraic varieties is a central research area in algebraic geometry. Given an algebraic variety, one can produce a somewhat simpler birational model of it by first taking a resolution of singularities and then running the minimal model program (MMP). We work in dimension $3$ and over the field of complex numbers, where both these theories are settled. The result of this procedure is either a Mori fibre space or a minimal model, depending on whether the initial variety was uniruled or not. We are interested in explicit classification of Mori fibre spaces, that is the study or birational relations among Mori fibre spaces as end points of the MMP. A Mori fibre space can be a unique product of the MMP, so-called {\it Birationally Rigid}, can have a few, or infinitely many birational models (see Definition \ref{RigDef} for a precise definition of rigidity). For example, a smooth quartic in $\MP^4$ is known to be birationally rigid \cite{Quartic} while a quartic  with a single $cA_2$ singular point has precisely $2$ birational models \cite{Corti-Mella}. On the other hand, the projective space $\MP^3$ is birational to any Fano variety $V_{22}$, whose moduli contain an uncountable set (see for example \cite{Prokh-Kuz}).

\noindent A Mori fibre space is a normal projective variety $X$ together with a morphism $\pi\colon X\rightarrow S$ with connected fibres to a lower dimensional variety $S$, where
\begin{enumerate}[$\bullet$]
\item $X$ is $\MQ$-factorial with terminal singularities,
\item $-K_X$ is $\pi$-ample, and
\item the relative Picard number $\rho_{X/S}$ is $1$.
\end{enumerate}

Based on the dimension of $S$, either $X$ is a conic bundle over the surface $S$, or it is a fibration of del Pezzo surfaces over a curve $S$, or it is a Fano 3-fold when $S$ is a point.

\begin{Def}
Let $\pi_X \colon X \to S$ and $\pi_Y \colon Y \to Z$ be Mori fibre spaces.
A birational map $\chi \colon X \dasharrow Y$ is called \emph{square} if it fits into a commutative diagram
\begin{displaymath}
\xymatrix
{ 
	X\ar@{-->}[r]^\chi \ar[d]_{\pi_X} & \ar[d]^{\pi_Y} Y \\
	S\ar@{-->}[r]^g & Z
}
\end{displaymath}
where $g$ is birational and, in addition, the induced map on the generic fibres $\chi_\eta \colon X_\eta \dasharrow Y_\eta$ is an isomorphism.
In this case we say that $X/S$ and $Y/Z$ are square-birational.
\end{Def}

\begin{Def}[{\cite[Definition~1.2]{Corti}}]\label{RigDef}
\noindent A Mori fibre space $\pi_X\colon X\rightarrow S$ is said to be {\it birationally rigid} 
if existence of a birational map $\chi\colon X\dashrightarrow Y$ to a Mori fibre space $\pi_Y\colon Y\to Z$ implies that there exists a birational self-map $\varphi\colon X\dashrightarrow X$ such that the birational map $\chi \circ\varphi$ is square.
\end{Def}

Birational rigidity of conic bundles has been studied extensively, see for example \cite{Sar-CB}. The reader is encouraged to consult \cite[\S~4]{Corti} for an overview and some interesting related questions on birational geometry of conic bundles. Comparatively, the question of (stable) non-rationality of conic bundles has recently had more development \cite{AO, BB, HKT}. Birational rigidity and non-rationality of Fano 3-folds has progressed in the past decades but is yet to be completed; see \cite{HT, Okada} for rationality and \cite{AO-BR} for birational rigidity and investigate the references therein for further details. The focus of this paper is on birational rigidity of 3-fold del Pezzo fibrations.

del Pezzo fibrations form ten classes, according to their degrees $1\leq d\leq 9$, there are two classes in degree 8. If the degree is $5$ or higher then the fibration is rational over the base \cite{CT, Manin}. Degree $4$ fibrations admit a conic bundle structure \cite{Alekseev}, hence they are not birationally rigid. Their stable rationality was recently studied in \cite{HT}.

A smooth degree $2$ del Pezzo surface can be defined as the zeros of a quartic form in $\MP(1,1,1,2)$, and a general such form defines a smooth del Pezzo sufrace of degree $2$. However, when defined over a base curve this will typically have some orbifold singularities of type $\frac{1}{2}(1,1,1)$ enforced by the zeros of the coefficient of the quadratic monomial in the defining polynomial of the 3-fold; see \cite[\S1.]{YPG} for notation and explanation of this type of singularities. This coefficient degree imposes a discrete invariant that splits the moduli space of such fibrations into an infinite set of families. As a result, whenever we use the term general we mean general after fixing a family. In low degrees most results on birational rigidity concentrate on smooth models, for example stable rationality of very general del Pezzo fibrations in low degrees with smooth total space was recently studied in \cite{KO}. Smoothness of the total space is a strong restriction in degrees $1$ and $2$ as they are satisfied in very few families. However, a general cubic surface (degree $3$ del Pezzo) fibration has smooth total space, which is no longer true in degrees $1$ and $2$. 

The following is the main result in the literature for birational rigidity of del Pezzo fibrations over $\MP^1$ with smooth total space, which was later improved slightly by Grinenko \cite{Grin2, Grin1, Grin3} and Sobolev \cite{Sobolev}, also with the smoothness condition.

\begin{Th}[Pukhlikov, {\cite[Theorem~2.1]{Pukh123}}]\label{Pukh} Let $\pi:X \to \MP^1$ be a del Pezzo fibration of degree $1$, $2$, or $3$, with smooth $X$, and assume generality in degree $3$. If $K_X^2$ is not in the interior of the Mori cone of effective $1$-cycles $\overline{\NE}(X)^o$, then $X$ is birationally rigid.
\end{Th}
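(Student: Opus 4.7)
The natural approach is the Noether--Fano--Iskovskikh method of maximal singularities, adapted to the fibration setting. Suppose for contradiction that there is a birational map $f\colon X\dashrightarrow Y$ to a Mori fibre space $\sigma\colon Y\to Z$ which cannot be completed into the commutative diagram of Definition~\ref{RigDef}. Choose a $\sigma$-relatively very ample mobile linear system $\CM_Y$ on $Y$ (with an appropriate twist by a pullback from $Z$) and let $\CM = f_*^{-1}\CM_Y$ on $X$. Since $\rho(X/\MP^1)=1$, we may write $\CM \sim_\MQ -nK_X + lF$, where $F$ is a fibre of $\pi$, $n>0$, and $l\in\MQ$. The Noether--Fano inequality then asserts that under the failure of $f$ to be fibrewise, the pair $(X,\tfrac{1}{n}\CM)$ is not canonical: there is a geometric valuation $E$ over $X$ whose centre $B=c_X(E)$ has codimension at least two and satisfies $\mult_E(\CM) > n\, a(E,X)$.

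The plan is then to exclude each possible type of centre $B$. When $B$ is a closed point, one restricts $\CM$ to a general smooth fibre through (or near) $B$ and invokes a $4n^2$-type inequality on that del Pezzo surface; since a general fibre is a smooth del Pezzo of degree $\leq 3$, its canonical class is rigid enough to bound the multiplicities and derive a contradiction. When $B$ is a vertical curve contained in a single fibre $F_t$, the intersection theory on the (Gorenstein) del Pezzo surface $F_t$, together with the low degree, prevents $\mult_B\CM$ from being large enough to violate canonicity.

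The main obstacle, and the sole place where the hypothesis on $K_X^2$ enters, is the case in which $B$ is a horizontal curve dominating $\MP^1$. Here one analyses the self-intersection $\CM^2$ as a $1$-cycle and pairs it with a carefully chosen effective divisor $D$ on $X$, obtaining an excess inequality
\begin{equation*}
\CM^2 \cdot D \;\geq\; (\mult_B\CM)^2\,(B\cdot D).
\end{equation*}
Expanding the left-hand side via $\CM \sim_\MQ -nK_X + lF$ produces a term proportional to $K_X^2\cdot D$. The hypothesis $K_X^2\notin\overline{\NE}(X)^o$ is precisely what allows one to choose $D$ effective with $K_X^2\cdot D\leq 0$ and $B\cdot D>0$, making the above inequality incompatible with the Noether--Fano estimate $\mult_E\CM > n\, a(E,X)$. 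Without the $K^2$-condition, horizontal sections genuinely produce birational self-maps (fibrewise Bertini-type involutions in degree $2$), so this hypothesis is not cosmetic: it is exactly the barrier that stops the excess intersection argument.

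Having ruled out all three types of centre, the map $f$ must respect fibres up to composition with a birational self-map of $X$, producing the commutative square of Definition~\ref{RigDef} together with an isomorphism of generic fibres. This is precisely birational rigidity.
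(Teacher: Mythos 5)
Your proposal captures the overall Noether--Fano framework, but it misplaces the role of the $K^2$-condition and, as a result, misses the central difficulty of Pukhlikov's proof. The $K^2$-condition $K_X^2\notin\overline{\NE}(X)^o$ is used at the very first step, to guarantee that in the decomposition $\CM\subset\big|-nK_X+lnF\big|$ one has $l\geqslant 0$ (otherwise $\CM^2\equiv n^2K_X^2+2n^2l\,(-K_X)\cdot F$ could not be effective). This nonnegativity of $l$ is then fed into the exclusion of \emph{point} centres, via the notion of a supermaximal singularity: case (ii) of Proposition~\ref{PrNF} produces an inequality $l<-\sum a(\nu_i,X,\tfrac1n\CM)/\nu_i(F_i)$, which combined with $l\geqslant0$ yields numbers $\gamma_i$ (Lemma~\ref{LemGamma}) for which some vertical cycle $Z^v_i$ has degree strictly below $4n^2\gamma_i$ (Corollary~\ref{CorSMSing}); Corti's $4n^2$-inequality at the $3$-fold level then gives the contradiction. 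Simply restricting $\CM$ to a general fibre, as you suggest, is far too weak: it gives no control near the centre in the special fibre, and it is exactly here that Pukhlikov's ladder and Corti's inequality do the real work.

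The horizontal case is handled quite differently from your sketch. Sections of $\pi$ are not excluded at all: they genuinely are maximal centres, and the proof \emph{untwists} them by composing with a fibrewise Bertini--Geiser-type involution $\chi_C$ (Proposition~\ref{Untwist-Curves}), which strictly decreases $n$. The $K^2$-condition plays no role in constructing these involutions; they exist unconditionally for degree $1$ and $2$ fibrations and are part of the output, i.e.\ the birational self-map $\tau$ in Definition~\ref{RigDef}, not a pathology that the hypothesis removes. Horizontal curves of higher degree and vertical curves away from singular points are ruled out by degree computations on the fibres, again without appealing to $K^2$. So your ``excess intersection with an effective $D$ satisfying $K_X^2\cdot D\leqslant 0$'' argument is aimed at the wrong case; in Pukhlikov's (and this paper's) structure, the $K^2$-condition earns its keep in the point-centre estimates, and the horizontal sections are untwisted rather than excluded.
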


This theorem will be the centre of attention in this article. The only birational rigidity statement for del Pezzo fibrations of low degree with quotient singularities is proven in \cite{Kr16}, which proves Pukhlikov's theorem above for special families of fibrations with such singularities. We will generalise these results for the general case with orbifold families of del Pezzo fibrations in degree $2$ in the following precise setting. There is a fundamental difference in the geometry of the models in the general case, which makes the proof more complicated: the singular points produce elementary Sarkisov links to other models square-birational to $X$, and that the construction of the staircase (explained below) for smooth points near the singular point is more complex.

\begin{mTh} \label{Th1}
Let $\pi\colon X \to \MP^1$ be a del Pezzo fibration of degree $2$. Suppose $X$ is a general quasi-smooth hypersurface in a $\MP(1,1,1,2)$-bundle over $\MP^1$ satisfying the $K^2$-total condition. Then $X$ is birationally rigid.
\end{mTh}

\paragraph{\bf Conditions in the Main Theorem} Here, we spell out assumptions in the theorem above, and explain their necessity.

\noindent {\it Quasi-smoothness}: quasi-smooth means that no singularities come from the defining equation of $X$, hence all singularities are indeed of type $\frac{1}{2}(1,1,1)$ and inherited from the ambient toric variety, see Remark\,\ref{qsmooth} for a precise definition. This condition cannot be removed as there exists an example of a non-birationally rigid degree 2 del Pezzo fibration satisfying other conditions that has a non-toric singular point \cite{Hamid-example}.

\noindent {\it The $K^2$-total condition}: The $K^2$-condition is precisely as in Theorem~\ref{Pukh}, that is $K_X^2\notin\overline{\NE}(X)^o$. 
For every singular point $Q$ of $X$ there is a Sarkisov link starting by blowing up $X$ at $Q$ and resulting in a new quasi-smooth model of $X$, that is square birational. This manoeuvre is explicitly described in the Section~\ref{Involution}.
Let $N$ be the number of singularities of $X$ and denote the singular points by $Q_i$ for $i \in \{1,\dots,N \}$.
Denoting by $X_I$ the model acquired by combining the elementary links corresponding to $Q_i$, $i \in I$, $I\subset \{1,\dots,N \}$, we say $X$ satisfies the $K^2$-total condition if for every $I \subset \{1,\dots,N\}$ the model $X_I$ satisfies the $K^2$-condition.
We are convinced that $K^2$-condition on $X$ may be enough, and the totality assumption is redundant. In Section~\ref{Involution}, we give an explicit recipe for constructing all $X_I$ models from $X$. Given $X$ embedded in a $\mathbb{P}(1,1,1,2)$-bundle it is rather easy to check this condition using the recipe. 
Note that in \cite[Corollary~6.4]{Shokurov-Choi} it is stated that $X$ is birationally rigid if the $K^2$-condition holds for any del Pezzo fibration $X^\prime \to \MP^1$ such that there is a square birational map $g: X \dasharrow X^\prime$.
Of course this result is impossible to apply in practice and usually one works with only $X$ in order to show birational rigidity.
However, in our situation we have explicit descriptions of all necessary models (for this check to be carried out) as described in Section \ref{Involution}.

\noindent {\it Generality}:
For each singular point, let the fibre containing the point, that is defined by a quartic $F$ in $\MP(1_x,1_y,1_z,2_w)$, be given by
$$w q(x,y,z) + r(x,y,z)=0,$$
where $q$ and $r$ are homogeneous polynomials of degrees $2$ and $4$ respectively.
We require that the intersection $q(x,y,z) = r(x,y,z)=0$ on $\MP^2$ consists of $8$ distinct points, for all singular points of $X$.
Let $\sigma: \widetilde{F} \to F$ be the blow up of $F$ at the point $(0:0:0:1)$.
We also require that $\widetilde{F}$ is smooth. This generality condition is necessary in our computations, as otherwise there are several arrangements of singularities and curves of low degree on $F$ to be considered in the ``exclusion'' process of the proof. In a few cases we have checked, this condition can be dropped. Upon improving the techniques we believe that the generality condition can be dropped altogether. Note that the models considered in \cite{Kr16} are special cases in our setting as for them $q=0$. They admit no Sarkisov link and in a sense are ``more rigid''.

\subsection*{Method of the proof} Birational rigidity is often proved via the method of maximal singularities~\cite{Pukh-MM}. Roughly speaking, the method goes as follows: assume there is a birational map $\chi \colon X\dashrightarrow Y$, where $Y\to Z$ is another Mori fibre space. Then consider $\mathcal{H}$, the transform of a very ample complete linear system $\mathcal{H}'$ on $Y$. Essentially, $\mathcal{H}$ is mobile and there exists a rational number $n>0$ and a divisor $A$ pulled back from the base of the fibration such that $\mathcal{H}\subset\mid -nK_X+A\mid$. It follows that the pair $(X,\frac{1}{n}\mathcal{H})$ is not canonical (this is standard, see for example the discussion after Theorem 1.9 in \cite{Corti}). This implies that there exists a valuation $E$ with centre $C\subset X$ such that
\[m_E(\mathcal{H})>na_E(K_X),\]
where $a_E(K_X)$ is the discrepancy of $E$ with respect to $K_X$ and $m_E$ is the multiplicity of $\mathcal{H}$ along $C$. This inequality together with the geometry of $X$ is then used to exclude many centres (curves and points on $X$) to satisfy this conditions by concluding various contradictions. Pukhlikov in \cite[Section~6]{Pukh123} studied multiplicities on towers of blow ups of centre on $X$ and introduced the {\it construction of the staircase} to achieve the desired contradiction. Corti in \cite[\S~5]{Corti} refines Pukhlikov's methods for the situations where the staircase technique is not necessary to reduce the computations without the tower by introducing a new inequality. We use a combination of these two techniques in Sections~\ref{super-max} and \ref{staircase} to exclude all smooth centres; we will then have to use the staircases together with Corti's inequality at various stages to obtain our results. We then show that the singular points produce Sarkisov links to other models $X_I$ of the del Pezzo fibration $\pi: X \to \MP^1$. We show that there is at least one $X_I$ with centres only at the smooth points, which allows a combination of techniques of Corti and Pukhlikov to be efficiently used.

\subsection{Models}\label{models-subs} Suppose $X\to\MP^1$ is a del Pezzo fibration of degree $2$, and view $X$ as a hypersurface of bi-degree $(d,4)$ in a toric variety $T$ with Picard group $\MZ^2$, where Cox ring of $T$ is given by the following data:
\begin{enumerate}[(i)]
\item the homogeneous coordinate ring of $T$ is $\Cox(T)=\MC[u,v,x,y,z,t]$,
\item with the irrelevant ideal $I=(u,v)\cap(x,y,z,t)$ and
\item the grading given by the columns of the matrix 
$$
\left(\begin{array}{ccccccc}
u&v&x & y & z & w   \\
1&1&\alpha&\beta&\gamma&\delta  \\
0&0&1 & 1 & 1 & 2
\end{array}\right),
$$
where $\alpha,\beta,\gamma,\delta$ are integers. As this description is invariant under an action of $\SL(2,\MZ)$ on the weight matrix, we can rescale to the following
$$
\left(\begin{array}{ccccccc}
u&v&x & y & z & w   \\
1&1&0&a&b&c  \\
0&0&1 & 1 & 1 & 2
\end{array}\right),
$$
where $0\leq a\leq b$ are positive integers and $c\in\MZ$. We denote this toric variety by $\MP(1,1,1,2)_{(0,a,b,c)}$, and say $X$, the hypersurface, has bi-degree $(l,4)$ in the new coordinate weights. 
\end{enumerate}

\begin{Remark} Note that, once $a,b$ and $c$ are fixed not all values of $l$ can define a suitable hypersurface. For example, if $\frac{l}{4}<a,b,\frac{c}{2}$, where $a,b,c$ are positive integers, then the equation of $X$ will be divisible by $x$, making $X$ reducible. There are various restrictions that one must take into account to have a suitable del Pezzo fibration. We refer to Section\,5 in \cite{Ahm} for various cases that are not suitable. For us, the set of discrete invariants that make sense in the Main Theorem are stated in Proposition\,\ref{PrK2} and in the Appendix.\end{Remark}

\begin{Def}\label{qsmooth} Assume the discrete invariants define a suitable del Pezzo fibration. Then quasi-smoothness means that the singular locus of the affine cover of $X$ in $\mathbb{C}^6$ is included entirely in the irrelevant locus defined by $I$.\end{Def}

\begin{Remark} A general member of a family of hypersurfaces of bi-degree $(4,l)$ in $\MP(1,1,1,2)_{(0,a,b,c)}$ is smooth only if $l = 2c$. Note that this is because such hypersurface does not intersect the singular curve of the ambient toric variety. This confirms how restrictive the smoothness condition is for these varieties.
\end{Remark}

For more analysis of this construction, and a complete list of those models that admit a Sarkisov link of Type III or IV, we refer the reader to \cite{Ahm}.

The following results describe some $X\subset T$, as above, which satisfy $K^2$-total condition in the Main Theorem.

\begin{Prop} \label{PrK2}
Let $X$ be a hypersurface of bi-degree $(4,\ell)$ in a $\MP_{0,a,b,c}(1,1,1,2)$-bundle over $\MP^1$.
Suppose 
\begin{align*}
7\ell/2 > 2a + 4b + 3c + 8
\end{align*}
then $X$ satisfies $K^2$-total condition.
\end{Prop}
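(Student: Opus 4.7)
The plan is to reduce the $K^2$-total condition to a numerical inequality by computing $K_X^2$ as a curve class and pairing it against the extremal nef ray of the ambient toric variety $T = \MP_{(0,a,b,c)}(1,1,1,2)$ over $\MP^1$. The intersection theory on $T$ is governed by the Cox ring data: $[D_x]=L$, $[D_y]=aM+L$, $[D_z]=bM+L$, $[D_w]=cM+2L$, together with $M^2 = 0$ (the fibres $D_u$, $D_v$ are disjoint) and $L^3 M = \tfrac{1}{2}$ (from a fixed-point calculation in the $\MP(1,1,1,2)$ fibre). The irrelevant-ideal relation $D_x \cdot D_y \cdot D_z \cdot D_w = 0$, expanded using $M^2 = 0$, yields $2L^4 + (2a+2b+c)L^3M = 0$, hence $L^4 = -\tfrac{2a+2b+c}{4}$.

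By adjunction, $-K_X = (L - kM)|_X$ with $k = \ell - a - b - c - 2$. A direct expansion gives
\begin{align*}
K_X^2 \cdot M|_X = 2 \qquad\text{and}\qquad K_X^2 \cdot L|_X = -\tfrac{7\ell}{2} + 2a + 2b + 3c + 8.
\end{align*}
By a Lefschetz-type result for quasi-smooth hypersurfaces, $\Pic(X) \simeq \Pic(T) = \MZ\langle M, L\rangle$; this identifies $N_1(X)$ with $N_1(T)$ via push-forward, and since effective curves push to effective curves, $\overline{\NE}(X) \subseteq \overline{\NE}(T)$. It therefore suffices to verify $K_X^2 \notin \overline{\NE}(T)^o$. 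Using the torus-invariant sections $\sigma_y, \sigma_z$ with numerical classes $(M\cdot, L\cdot) = (1, -a), (1, -b)$, together with the half-section $C = \{x=y=z=0\}$ for which $(M\cdot C, L\cdot C) = (\tfrac{1}{2}, -\tfrac{c}{4})$, the extremal nef ray of $T$ is $L' = L + \max\{b, c/2\}\,M$. Since $K_X^2 \cdot M = 2 > 0$, the only condition to check is
\begin{align*}
K_X^2 \cdot L' = -\tfrac{7\ell}{2} + 2a + 2b + 3c + 8 + \max\{2b, c\} \leq 0,
\end{align*}
which holds strictly under the hypothesis.

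For the total condition the same calculation applies to each $X_I$ obtained from the Sarkisov links of Section~\ref{Involution}. Each elementary link replaces the tuple $(0, a, b, c; \ell)$ by a new tuple $(0, a_I, b_I, c_I; \ell_I)$, with $X_I$ again a quasi-smooth hypersurface of bi-degree $(4, \ell_I)$ in the new $\MP(1,1,1,2)$-bundle, so the analogous intersection calculation reduces the $K^2$-condition on $X_I$ to $\tfrac{7\ell_I}{2} \geq 2a_I + 2b_I + 3c_I + 8 + \max\{2b_I, c_I\}$.

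The main obstacle is the combinatorial bookkeeping in this last step: tracking the parameter changes under each of the $2^N$ possible compositions of elementary links, and verifying that the strict inequality assumed on $(a, b, c, \ell)$ propagates to the non-strict analogue on every $(a_I, b_I, c_I, \ell_I)$. This requires a case analysis split on which of $2b_I$ or $c_I$ attains the maximum at each stage, exploiting that the strict slack in the hypothesis is precisely what absorbs the worst-case parameter shift along any sequence of links.
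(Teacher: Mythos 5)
The calculation for $X$ itself is correct and matches the paper's: both approaches reduce the $K^2$-condition to pairing $K_X^2$ against the nef divisor $M_T+\max\{b,c/2\}F_T$ (your $L'$), yielding the inequality in the statement. The genuine gap is in how you handle the \emph{total} condition. You propose to re-embed each $X_I$ as a quasi-smooth hypersurface of some new bi-degree $(4,\ell_I)$ in a new $\MP_{(0,a_I,b_I,c_I)}(1,1,1,2)$-bundle, and then propagate the inequality through ``combinatorial bookkeeping'' of the parameter changes. Two things go wrong here. First, the re-embedding is not justified: Section~\ref{fibrepreserving-models} realises $X_I$ only as a complete intersection in a $\MP(1,1,1,2,2)$-bundle, and the paper establishes a hypersurface model only under the extra hypothesis $c\gem 2b$ (and in that case the parameters and bi-degree are \emph{unchanged}, so there is no ``shift'' to track). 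Second, you name the bookkeeping as ``the main obstacle'' but do not carry it out, so the proof of the total condition is essentially missing.

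The paper sidesteps all of this with an invariance argument you do not have. It first observes by adjunction that $\deg K_{X_I}=\deg K_X=(-1,\ell-2-a-b-c)$ as bi-degrees for every $I$. Then Lemma~\ref{sameint} shows, by computing directly on the ambient scroll $V_{|I|}$ using the complete-intersection description, that the triple intersection of any three divisors depends only on their bi-degrees and not on $I$ --- the point being that the weights on $w$ and $s$, namely $c+|I|$ and $c+N-|I|$, enter only through their sum $2c+N=\ell$. Finally Lemma~\ref{defnef} notes $2M_I+\max\{2b,c\}F_I$ is base-point free, hence nef, on every $X_I$. Putting these together, $K_{X_I}^2\cdot D_I=K_X^2\cdot D$ for all $I$, so the single computation you already did on $X$ establishes the total condition with no case analysis. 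To repair your proof you would need to either prove a version of Lemma~\ref{sameint} (intersection numbers are model-independent) or genuinely establish the hypersurface re-embedding of every $X_I$ and carry out the propagation, which you have not done.
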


\begin{Cor} \label{K^2-families}
Suppose $c \gem 2b$ and $X$ does not satisfy the $K^2$-total condition.
Then $X$ belongs to one of the $20$ families with $a,b,c,\ell$ from the following table. In particular, if $c \gem 2b$, $X$ satisfies the generality conditions, and $X$ is not birationally rigid, then $X$ belongs to one of these $20$ families.
\begin{center}
\begin{tabular}{|l|l|l|l|}
\hline
 $a$ & $b$ & $c$ & $\ell$ \\\hline
 $0$ & $0$ & $0$ & $0,1,2$ \\\hline
 $0$ & $0$ & $1$ & $2,3$ \\\hline
 $0$ & $0$ & $2$ & $4$ \\\hline
 $0$ & $1$ & $2$ & $4,5$ \\\hline
 $0$ & $1$ & $3$ & $6$ \\\hline
 $0$ & $2$ & $4$ & $8$ \\\hline
 $1$ & $1$ & $2$ & $4,5$ \\\hline
 $1$ & $1$ & $3$ & $6$ \\\hline
 $1$ & $2$ & $4$ & $8$ \\\hline
 $2$ & $2$ & $4$ & $8,9$ \\\hline
 $2$ & $2$ & $5$ & $10$ \\\hline
 $2$ & $3$ & $6$ & $12$ \\\hline
 $3$ & $3$ & $6$ & $12$ \\\hline
 $4$ & $4$ & $8$ & $16$ \\\hline
\end{tabular}
\end{center}
\end{Cor}

\paragraph{\bf Convention} We denote numerical equivalence by $\equiv$ and use $\sim_\MQ$ for linear equivalence of $\MQ$-divisors. All varieties are algebraic, normal and defined over $\MC$ unless stated otherwise.
For a rational map $\chi: Y \dasharrow X$ we denote the proper transform on $Y$ of an object on $X$ as $\chi^{-1}_*(A)$.

\begin{Ack}
A substantial part of this work was carried out while the second author was at the Max Planck Institute for Mathematics in Bonn. We would like to thank that institute for great hospitality and stimulating research environment. We also would like to thank Dr.\ Okada Takuzo for useful communications and for pointing out some inaccuracies in an earlier version. The first author is partially supported by the EPSRC grant EP/T015896/1. The second author is supported by KIAS Individual Grant n.\ MG069801.
\end{Ack}

\section{Preliminary Results: Singularities of pairs}

In this section, we recall the notion of singularities of pairs.
We also state some results which let us find relations between multiplicities and singularities.

Let $X$ be an algebraic variety, possibly non-projective and singular. Let $E$ be a prime divisor on $X$. 
Then it is easy to associate a discrete valuation $\nu_E$ of $\MC(X)$ corresponding to $E$ by
 $$\nu_E(f)=\mult_E (f)\text{ for }f \in \MC(X).$$

\begin{Def}[\cite{Valuations}]
Let $\varphi:\widetilde{X}\to X$ be a projective birational morphism and let $\nu$ be a discrete valuation of $\MC(X)$. 
We  say that a triple $(\widetilde{X},\varphi,E)$ is a \emph{realization} of the discrete valuation $\nu$ if $E$ is a prime divisor on $\widetilde{X}$ and $\nu_E=\nu$.
Then $\varphi(E)$ is called the \emph{centre} of the discrete valuation $\nu_E$ on $X$.
\end{Def}

Note that if $X$ is projective, then every discrete valuation of the field $\MC(X)$ admits a centre on $X$, which does not depend on the realization.

\begin{Def}
Let $D$ be a $\MQ$-divisor on $X$. 
The \emph{multiplicity} of a discrete valuation $\nu$ at $D$ is 
\begin{align*}
\nu(D)=\mult_E \varphi^*(D) \in \MQ
\end{align*}
for some realization $(\widetilde{X},\varphi,E)$ of $\nu$. If the centre of $\nu$ on $X$ is of codimension $\geqslant 2$, then we can write
\begin{align*}
\varphi^*(D)=\varphi^{-1}_*(D)+\nu(D)E+\sum a_i E_i,
\end{align*}
where $E_i$ are the exceptional divisors of $\varphi$ that differ from $E$ and $a_i\in\MQ$. 
It is important to note that the multiplicity does not depend on the choice of the realization.
\end{Def}

\begin{Def}
Let $D$ be a $\MQ$-divisor on $X$ such that $K_X+D$ is $\MQ$-Cartier. 
Let $\pi:\widetilde{X}\to X$ be a birational morphism and let $\widetilde{D}=\pi^{-1}_*(D)$ be the proper transform of $D$. 
Then
\begin{align*}
K_{\widetilde{X}}+\widetilde{D}\sim\pi^*(K_X+D)+\sum_E a(E,X,D)E,
\end{align*}
where $E$ runs through all distinct exceptional divisors of $\pi$ on $\widetilde{X}$ and $a(E,X,D)$ are rational numbers. 
The number $a(E,X,D)$ \big(=$a(\nu_E,X,D)$\big) is called the \emph{discrepancy} of the divisor $E$ (discrete valuation $\nu_E$) with respect to the pair $(X,D)$.
\end{Def}

\begin{Def}
Let $D$ be a divisor on $X$. We say that the pair $(X,D)$ is \emph{terminal} (respectively \emph{canonical}) \emph{at a discrete valuation} $\nu$ with a centre on $X$ if $a(E,X,D)>0$ (respectively $a(E,X,D)\geqslant0$) for some realization $(\widetilde{X},\varphi,E)$ of $\nu$. 
We say that the pair $(X,D)$ is terminal (respectively canonical) \emph{at a subvariety} $Z$ if it is terminal (respectively canonical) at every discrete valuation $\nu$ on $K(X)$ such that a centre of $\nu$ on $X$ is $Z$. 
We say that the pair $(X,D)$ is terminal (respectively canonical) if it is terminal (respectively canonical) at every subvariety of codimension $\geqslant 2$. 
If $D=0$, we simply say that $X$ has only terminal (respectively canonical) singularities.
\end{Def}

\begin{Def}
Let $\mathcal{M}$ be a linear system, not necessarily mobile, on $X$. 
We say that the pair $(X,\lambda\mathcal{M})$ is terminal (respectively canonical) if for every subvariety $Z$ of codimension $\geqslant 2$ and for a general $D\in \mathcal{M}$ the pair $(X,\lambda D)$ is terminal (respectively canonical) at $Z$. 
\end{Def}

\begin{Remark} \label{RemLogPull}
Consider the pair $(X,\mathcal{M})$. Let $f:Y\to X$ be a projective birational morphism and $E_i$ the exceptional divisors of $f$, and $\widetilde{\mathcal{M}}$ the proper transform of $\mathcal{M}$ on $Y$. Then
\begin{align*}
K_Y+\widetilde{\mathcal{M}}-\sum a(E_i,X,\mathcal{M}) E_i\sim f^*(K_X+\mathcal{M}).
\end{align*}
The pair
\begin{align*}
\big(Y,\widetilde{\mathcal{M}}-\sum a(E_i,X,\mathcal{M}) E_i\big)
\end{align*}
is called the \emph{log pullback} of the pair $(X,\mathcal{M})$.
It follows from the definition that the log pullback of the pair has the same singularities as the pair.
Note that we view $\widetilde{\mathcal{M}}-\sum a(E_i,X,\mathcal{M}) E_i$ as a multiple of a linear system with fixed components.
\end{Remark}

\begin{Lemma}[{\cite[Theorem~1.8]{inequalities}}] \label{LemVanya}
Let $\mathcal{M}$ be a mobile linear system on $\MC^2$. 
Let $C$ be a curve passing through the origin. 
Suppose the pair $(\MC^2,\frac{1}{n}\mathcal{M}-\alpha C)$ is not canonical at $0$ then for general $D_1,D_2 \in \mathcal{M}$
\begin{align*}
\mult_0 D_1\cdot D_2 > 4n^2\alpha.
\end{align*}
\end{Lemma}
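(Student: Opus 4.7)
The plan is to follow the standard strategy pioneered by Iskovskikh--Manin and refined by Corti, Pukhlikov, and Cheltsov for local surface multiplicity inequalities: translate non-terminality of the pair into a single linear inequality on the multiplicities along a chain of point blowups, express the local intersection number as a sum of squares of those multiplicities, and close by a combination of weighted Cauchy--Schwarz, AM--GM, and the monotonicity of the multiplicity sequence for a mobile system.

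First, I would take a log resolution $\varphi : Y \to \MC^2$ of the pair that extracts the valuation $E$ witnessing non-terminality. Since the ambient surface is smooth, $\varphi$ factors as a sequence of point blowups $\sigma_i : Y_i \to Y_{i-1}$ starting at the origin, with exceptional divisors $E_1, \ldots, E_N$, $E = E_N$. For each $i$ record $m_i = \mult_{p_{i-1}} \widetilde{\mathcal{M}}_{i-1}$, $c_i = \mult_{p_{i-1}} \widetilde{C}_{i-1}$, and $a_i = a(E_i, \MC^2)$. The standard discrepancy recursion yields non-negative integer weights $w_j$ (measuring the intersection multiplicity of $E_j$ with the iterated centres of $E_N$) with
\begin{align*}
\nu_E(\mathcal{M}) = \sum_j w_j m_j, \qquad \nu_E(C) = \sum_j w_j c_j, \qquad a_N = \sum_j w_j,
\end{align*}
so that non-terminality at $E$ reads $\sum_j w_j m_j \geq n\, a_N + n \alpha \sum_j w_j c_j$. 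On the intersection side, iterating the surface identity $(D_1 \cdot D_2)_{p_{i-1}} = (\widetilde D_1 \cdot \widetilde D_2)_{p_i} + m_i^2$ for two general members $D_1, D_2 \in \mathcal{M}$ (whose proper transforms on $Y$ are disjoint, since $\mathcal{M}$ is mobile and $Y$ is a log resolution) gives
\begin{align*}
\mult_0(D_1 \cdot D_2) \;=\; \sum_{i=1}^N m_i^2 .
\end{align*}

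Weighted Cauchy--Schwarz combined with AM--GM then produces the core chain of inequalities
\begin{align*}
a_N \sum_j w_j m_j^2 \;=\; \Bigl(\sum_j w_j\Bigr)\Bigl(\sum_j w_j m_j^2\Bigr) \;\geq\; \Bigl(\sum_j w_j m_j\Bigr)^2 \;\geq\; \bigl(n a_N + n \alpha \sum_j w_j c_j\bigr)^2 \;\geq\; 4 n^2 \alpha\, a_N \!\sum_j w_j c_j ,
\end{align*}
and dividing by $a_N$, together with $c_1 = \mult_0 C \geq 1$ and $w_1 \geq 1$ forcing $\sum_j w_j c_j \geq 1$, gives the weighted bound $\sum_j w_j m_j^2 \geq 4 n^2 \alpha$.

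The main technical obstacle is converting this \emph{weighted} estimate $\sum_j w_j m_j^2 \geq 4 n^2 \alpha$ into the \emph{unweighted} conclusion $\sum_i m_i^2 \geq 4 n^2 \alpha$. Because all $w_j \geq 1$, the weighted sum dominates the unweighted one, so no direct comparison works; one must exploit the specific shape of mobility. Concretely, the multiplicity sequence along the tower is controlled by $m_1 \geq m_2 \geq \cdots$ (the general estimate $\mult_{p} \widetilde{\mathcal{M}} \leq \mult_{0} \mathcal{M}$ propagated down the chain), while the weights $w_j$ only exceed $1$ when $E_j$ is blown up at an intersection of two earlier exceptional divisors---in which case the subsequent $m_j$'s are forced to drop accordingly. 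A careful bookkeeping on the dual graph of the resolution---isolating a principal chain, estimating the branch contributions separately, and combining with the monotonicity above---closes the gap between weighted and unweighted sums. This bookkeeping is the technical heart of \cite[Theorem~1.6]{inequalities} and is what allows the clean statement cited here.
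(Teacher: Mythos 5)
There is a genuine gap, and you have in fact flagged it yourself. Your setup is fine: the weights $w_j$ with $\nu_E(\mathcal{M})=\sum w_j m_j$, $\nu_E(C)=\sum w_j c_j$, $a(E,\MC^2)=\sum w_j$, the Noether--Fano-type reformulation of non-terminality, and the bound $\mult_0(D_1\cdot D_2)\gem\sum_i m_i^2$ (note this should be an inequality, not the equality you claim -- a log resolution of the pair need not resolve the base locus of $\mathcal{M}$, and infinitely near base points other than the centres $p_i$ also contribute -- but the direction you need is correct). The weighted Cauchy--Schwarz/AM--GM chain then correctly yields $\sum_j w_j m_j^2\gem 4n^2\alpha\sum_j w_j c_j\gem 4n^2\alpha$. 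The problem is that this is strictly weaker than what the lemma asserts: since every $w_j\gem 1$, the weighted sum $\sum w_j m_j^2$ dominates $\sum m_j^2$, so your inequality does not bound $\mult_0(D_1\cdot D_2)$ at all. The entire content of the lemma lies in the step you describe only heuristically (``careful bookkeeping on the dual graph \dots closes the gap''), and your final sentence even defers that step to the cited theorem, which is circular if the aim is to prove it. The heuristic itself (weights exceed $1$ only at satellite points, where multiplicities drop) is not an argument: monotonicity $m_1\gem m_2\gem\cdots$ plus the weighted inequality does not formally imply the unweighted bound, and no estimate is offered that would.

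For comparison, the paper does not prove this statement either -- it quotes it as \cite[Theorem~1.6]{inequalities} -- so there is no internal proof to measure your route against; but the known complete arguments for inequalities of this type do not proceed by repairing the weighted Cauchy--Schwarz estimate. They either run an induction on the length of the resolution with a strengthened statement (after one blow up the pair acquires boundary along $E_1$ with a coefficient that must be carried as a parameter, exactly as in Corti's treatment of the $4n^2$-inequality), or they split the Noether--Fano inequality according to which centres lie on the strict transform of $C$ and on earlier exceptional curves, and optimise the resulting quadratic form directly in the unweighted multiplicities. Without carrying out one of these (or your promised bookkeeping in full), the proposal establishes only the weighted estimate and therefore does not prove the lemma.
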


\begin{Prop}[{Corti's inequality, \cite[Theorem~3.12]{Corti}}] \label{CortiIneq}
Let $\sum F_i \subset \MC^3$ be a reduced surface. 
Let $\mathcal{M}$ be a mobile linear system on $\MC^3$ and let $Z=D_1\cdot D_2$ be the intersection of general divisors $D_1,D_2\in \mathcal{M}$. 
Write
$Z=Z_{h}+\sum Z_i$, where the support of $Z_i$ is contained in $F_i$ and $Z_h$ intersects $\sum F_i$ properly. 
Let $\gamma_i > 0$ be rational numbers and such that the pair $(\MC^3,\frac{1}{n}\mathcal{M}-\sum \gamma_i F_i)$ is not canonical. 
Then there are positive rational numbers $0 < t_i \leqslant 1$ such that
\begin{align*}
\mult_0 Z_h + \sum t_i \mult_0 Z_i > 4n^2(1+\sum \gamma_i t_i\mult_0 F_i).
\end{align*}
\end{Prop}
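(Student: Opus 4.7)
The plan is to compute $K_X^{2}$ as a $1$-cycle on $X$ and exhibit an explicit nef divisor $D$ on $X$ with $K_X^{2}\cdot D<0$, which forces $K_X^{2}\notin\overline{\NE}(X)^{o}$; the ``total'' half of the conclusion then follows by running the same calculation on each of the models $X_I$ produced by the elementary links of Section~\ref{Involution}.

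I would first set up intersection theory on $T=\MP_{0,a,b,c}(1,1,1,2)$. Writing $M\equiv D_{u}\equiv D_{v}$ and $L\equiv D_{x}$, the grading forces $D_{y}\equiv L+aM$, $D_{z}\equiv L+bM$ and $D_{w}\equiv 2L+cM$, while the irrelevant ideal yields the Stanley--Reisner relations $M^{2}=0$ and $L(L+aM)(L+bM)(2L+cM)=0$. Fixing the fibrewise normalisation $L^{3}\cdot M=\tfrac{1}{2}$ (restriction of $\mathcal{O}(1)^{3}$ to $\MP(1,1,1,2)$), these relations give $L^{4}=-(2a+2b+c)/4$. Since $X\equiv \ell M+4L$ and $K_{T}+X\equiv -AM-L$ with $A=2+a+b+c-\ell$, expanding $(K_{T}+X)^{2}\cdot X\cdot M$ and $(K_{T}+X)^{2}\cdot X\cdot L$ on $T$ yields
\begin{align*}
K_{X}^{2}\cdot M=2, \qquad K_{X}^{2}\cdot L=8+2a+2b+3c-\tfrac{7\ell}{2};
\end{align*}
in particular the hypothesis is equivalent to $K_X^{2}\cdot L<-\max\{2b,c\}$.

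Next I would locate the desired nef divisor by identifying the two extremal rays of $\overline{\NE}(T)$. The toric $1$-cycles on $T$ (triple products of torus-invariant divisors) split into fibre classes with $M$-degree zero, together with horizontal classes whose $L/M$-ratios are $-a$, $-b$, $-c/2$, $0$. Since $0\le a\le b$, the ray of $\overline{\NE}(T)$ transverse to the fibre is generated by the horizontal class with $L/M$-ratio $-\max\{b,c/2\}$, so
\begin{align*}
D:=L+\max\{b,c/2\}\,M
\end{align*}
pairs non-negatively with every extremal ray and is nef on $T$. Any effective curve on $X$ is also effective on $T$ with the same numerical class, hence $D|_{X}$ is nef on $X$. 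The resulting pairing
\begin{align*}
K_{X}^{2}\cdot D = 8+2a+2b+3c-\tfrac{7\ell}{2}+\max\{2b,c\}
\end{align*}
is strictly negative by hypothesis, so $K_{X}^{2}\notin\overline{\NE}(X)^{o}$ and the $K^{2}$-condition holds for $X$.

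For the totality part I would apply the same argument to each linked model $X_{I}$. The elementary links described in Section~\ref{Involution} replace $X$ by another quasi-smooth hypersurface of bi-degree $(4,\ell_{I})$ in an ambient $\MP_{0,a_{I},b_{I},c_{I}}(1,1,1,2)$, with weights $(a_{I},b_{I},c_{I},\ell_{I})$ given by a fully explicit recipe in terms of $(a,b,c,\ell)$ and the chosen subset $I$. Repeating the intersection computation on $X_{I}$ reduces its $K^{2}$-condition to an inequality of the same shape, $7\ell_{I}/2>8+2a_{I}+2b_{I}+3c_{I}+\max\{2b_{I},c_{I}\}$. The main obstacle is the resulting bookkeeping step: one has to check from the explicit recipe of Section~\ref{Involution} that this quantity is non-decreasing under each elementary link, the only delicate case being when an application of a link swaps which of $2b$ and $c$ attains the maximum. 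Once this monotonicity is in place, the hypothesis on $X$ propagates to every $X_{I}$, giving the $K^{2}$-total condition.
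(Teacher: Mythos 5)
Your proposal does not address the statement you were asked to prove. The statement is Corti's inequality (Proposition~\ref{CortiIneq}): a purely local assertion about a mobile linear system $\mathcal{M}$ on $\MC^3$, a reduced surface $\sum F_i$, and the cycle $Z=D_1\cdot D_2$, namely that non-terminality of the pair $(\MC^3,\frac{1}{n}\mathcal{M}-\sum\alpha_i F_i)$ at the origin forces the multiplicity bound $\mult_0 Z_h+\sum t_i\mult_0 Z_i\gem 4n^2(1+\sum\alpha_i t_i\mult_0 F_i)$ for suitable weights $0<t_i\lem 1$. What you have written instead is a global intersection-theoretic computation of $K_X^2$ on a hypersurface of bi-degree $(4,\ell)$ in a $\MP(1,1,1,2)$-bundle, the construction of a nef divisor $L+\max\{b,c/2\}M$, and a monotonicity argument over the models $X_I$ --- that is, an argument aimed at Proposition~\ref{PrK2} (the sufficient criterion for the $K^2$-total condition), which is a different result with different hypotheses and a different conclusion. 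Nothing in your text engages with discrepancies, non-terminal valuations, or multiplicities of the cycle $Z$ at a point, so as a proof of Proposition~\ref{CortiIneq} it is vacuous rather than merely incomplete.

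For calibration: the paper itself does not reprove Corti's inequality; it imports it verbatim from Corti's article (Theorem~3.12 there), where the proof proceeds by passing to a log resolution of the pair, analysing the chain of exceptional divisors over the point together with the Noether--Fano-type inequality on their multiplicities, and optimising a quadratic form in those multiplicities to extract the weights $t_i$. If your goal was to supply a self-contained proof, that is the machinery you would need to reconstruct; if your intersection computation was meant for Proposition~\ref{PrK2}, it should be compared against the paper's proof of that proposition (via Lemmas~\ref{ToricInt}, \ref{sameint} and \ref{defnef}), not against Proposition~\ref{CortiIneq}.
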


\begin{Remark}
Decomposition of $Z=Z_h+\sum Z_i$ is not unique, but the inequality holds for any choice of the decomposition. 
Note that it is not a requirement that $\sum F_i$ is a normal crossing divisor, nor do we ask the surfaces $F_i$ to be smooth.

\end{Remark}

\section{Rigidity of del Pezzo fibrations} \label{SecRig}

\subsection{Noether-Fano method for del Pezzo fibrations}
Let us first describe the framework of proving birational rigidity for del Pezzo fibrations.
Let $\pi: X \to \MP^1$ be a del Pezzo fibration (satisfying $K^2$-condition) and let $\chi\colon X \dasharrow Y$ be a birational map to a variety admitting a Mori fibre space $\pi_Y\colon Y\to Z$.
Let $H$ be a very ample divisor on $Y$ and $\CM = \chi^{-1}_* (\big| H \big|)$; We say that $\CM$ is a \emph{mobile linear system associated to} $\chi$.
There are non-negative numbers $n \in \MZ$ and $l \in \MQ$ such that 
$\CM \subset \big| -n K_X + n l F \big|$, where $F$ is a fibre of $\pi$.

The core of the Noether-Fano method is the following. 
Suppose $\chi$ is not an isomorphism, then the pair $(X, \frac{1}{n} \CM)$ is not canonical.
There are several possibilities for the centres of singularities of this pair:
a singular point of $X$, a curve passing through a singular point of $X$, any other curve of $X$, and a nonsingular point of $X$.
To prove $X$ is birationally rigid, one uses the geometric properties of $X$ to exclude a centre or untwist $\chi$ using that centre. Excluding goes by proving that the pair is canonical at the centre, and untwisting composes $\chi$ with some square birational map $\varphi\colon X\dashrightarrow X'$, where $\chi\circ\varphi^{-1}$ has lower Sarkisov degree than $\chi$ (see \cite[\S2.1]{Corti} for the definition). Thus the mobile linear system associated to $\chi \circ \varphi^{-1}$ is less complicated, and the model $X$ is replaced by $X'$. 

We do this analysis for orbifold del Pezzo fibrations of degree $2$.
Let $X$ be a quasi-smooth hypersurface in a $\MP(1,1,1,2)$-bundle over $\MP^1$.
Suppose also that $\pi: X \to \MP^1$ is a del Pezzo fibration of degree $2$ and that $X$ satisfies the $K^2$-condition.

\subsection{Step 1: curves not passing through singular locus of $X$}
This part is well known by the work of Pukhlikov \cite{Pukh123}.

\begin{Prop}[{\cite[\S3, Cases~2~and~4]{Pukh123}}] \label{Exc-Curves}
Let $C$ be a curve on $X$ which is not a section of $\pi$.
Suppose also that $C$ does not pass through singular points of $X$.
Then the pair $(X,\frac{1}{n}\mathcal{M})$ is canonical at $C$.
\end{Prop}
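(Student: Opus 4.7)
My plan is to proceed by contradiction. Suppose $(X,\frac{1}{n}\CM)$ is not canonical at $C$. Since $C$ lies in the smooth locus of $X$, the discrepancy of the blow-up of $C$ equals $1$, so non-canonicity forces $\mult_C \CM > n$. Take two general members $D_1,D_2 \in \CM$. Using $\CM \subset |-nK_X + lnF|$ and $F^2 = 0$, the $1$-cycle $D_1 \cdot D_2$ has numerical class $n^2 K_X^2 - 2ln^2 K_X \cdot F$, and because $C$ is smooth at its generic point, it contains $(\mult_C\CM)^2\, C$ as an effective subcycle.

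If $C$ is horizontal but not a section of $\pi$, then $C \cdot F \geq 2$. Intersecting the class identity above with the nef fibre $F$ gives $F \cdot D_1 \cdot D_2 = n^2\, K_X^2 \cdot F = 2n^2$, using $K_X^2 \cdot F = K_F^2 = 2$ for a degree-$2$ del Pezzo fibre. Combined with $F \cdot D_1 \cdot D_2 \geq (\mult_C\CM)^2 (C \cdot F) \geq 2(\mult_C\CM)^2 > 2n^2$, this is an immediate contradiction.

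The remaining case is $C \subset F_0$ for some fibre $F_0$, where $C \cdot F = 0$ and the previous trick fails. Here I would restrict to $F_0$: by adjunction with $F|_{F_0} = 0$, each $D_i|_{F_0}$ is an effective divisor in $|-nK_{F_0}|$ of anticanonical degree $2n$. Set $d := (-K_{F_0}) \cdot C$. Since $C$ misses all singular points of $X$, it does not lift to a $(-2)$-curve on any resolution of $F_0$, so $d \geq 1$. The anticanonical degree count yields $d \cdot \mult_C\CM \leq 2n$, which contradicts $\mult_C\CM > n$ whenever $d \geq 2$. Hence $d = 1$ and $C$ is a line (a $(-1)$-curve) on $F_0$.

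The main obstacle is this final subcase. I would invoke the $K^2$-condition to produce a nef class $N = -K_X + \tfrac{1}{2}K_X^3\, F$ satisfying $N \cdot K_X^2 = 0$; a direct computation then gives $N \cdot D_1 \cdot D_2 = 4ln^2$ and $N \cdot C = 1$, whence $(\mult_C\CM)^2 \leq 4ln^2$. For $l \leq 1/4$ this already contradicts $\mult_C\CM > n$, but for larger $l$ one needs a sharper bound. Here I would either build the Pukhlikov ladder of successive blow-ups along $C$ and points on the exceptional divisors, tracking multiplicities of the strict transform of $\CM$ in the style of \cite{Pukh123}, or exploit the double-cover geometry of $F_0 \to \MP^2$: the Galois involution pairs $C$ with a conjugate line $C'$ satisfying $C + C' \sim -K_{F_0}$ and $C \cdot C' = 2$, and requiring $D|_{F_0} - \alpha C$ (with $\alpha = \mult_C\CM$) to be represented by an effective divisor not containing $C$ places further constraints on $\alpha$ that, combined with the $K^2$-bound, force $\alpha \leq n$, the desired contradiction.
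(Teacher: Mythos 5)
The paper offers no argument for this Proposition --- it is imported wholesale from \cite{Pukh123} --- so your proposal has to stand on its own, and it does not yet. Your first two cases are complete and correct: horizontal non-sections via $Z\cdot F=2n^2$ against $2(\mult_C\mathcal{M})^2$, and vertical curves with $d=(-K_{F_0})\cdot C\gem 2$ via the degree count $2n\gem d\cdot\mult_C\mathcal{M}$. But the decisive case $d=1$ (a line in a fibre) is only sketched, and the sketch as written does not close. The nef class $N=-K_X+\tfrac{1}{2}K_X^3F$ does give $(\mult_C\mathcal{M})^2\lem 4ln^2$, but $l$ comes from the unknown map $f$ and is in general much larger than $1/4$, so this is not a borderline technicality; and your two escape routes are only named, not executed. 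The ladder of \cite{Pukh123} is designed for point centres lying on half-lines, not for a curve centre, and the effectivity constraint you propose on $D|_{F_0}=\alpha C+R$ with $R\not\supset C$ is vacuous: $R\cdot C=n+\alpha\gem 0$ automatically because $C^2=-1$, while intersecting with the conjugate $C'$ only yields $n-2\alpha\gem 0$ when $C'\not\subset R$, which you cannot assume.

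What actually closes this case is a different fibre-level inequality, independent of $l$ and of the $K^2$-condition: with $C+C'\sim -K_{F_0}$, the divisor $-K_{F_0}+C'$ (the pullback of the anticanonical class under the contraction of $C'$) is nef and meets both $-K_{F_0}$ and $C$ in $3$, so effectivity of $D|_{F_0}-\alpha C\sim -nK_{F_0}-\alpha C$ forces $3n-3\alpha\gem 0$, i.e.\ $\mult_C\mathcal{M}\lem n$. Even then one must check that this geometry survives on the actual fibres: quasi-smoothness only controls the singularities of $X$, so $F_0$ may be a quartic in $\MP(1,1,1,2)$ with du Val points at smooth points of $X$ which $C$ is allowed to meet (there $C^2$ need not be $-1$, $C'$ may coincide with $C$, and nefness must be rechecked); this is precisely where the generality of the fibres is used in \cite{Pukh123} and in the present paper. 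Until the $d=1$ case is argued at this level of care, your proof has a genuine gap.
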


\begin{Prop}[{\cite[\S3, Case~2]{Pukh123}}] \label{Untwist-Curves}
Let $C$ be a section of $\pi$ which does not pass through a singular point of $X$.
Then either the pair $(X,\frac{1}{n}\mathcal{M})$ is canonical at $C$ or there exists a birational involution $\varphi_C$ such that
\begin{itemize}
	\item there are numbers $n^\prime<n$ and $l^\prime$ such that $\varphi_C(\mathcal{M})\subset \big| -n^\prime K_X +n^\prime l^\prime F \big|$, and
	\item the pair $\big( X, \frac{1}{n^\prime} (\varphi_C)(\mathcal{M}) \big)$ is canonical at $C$.
\end{itemize}
\end{Prop}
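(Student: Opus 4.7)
The plan is to realise $\chi_C$ as the descent of the fibrewise Bertini involution on the blow-up of $C$, and to use the lattice action of Bertini on $\Pic$ of the generic fibre to read off the effect on $\CM$.

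First, I construct $\chi_C$. Let $\sigma\colon\widetilde X\to X$ be the blow-up along $C$ with exceptional divisor $E$. Since $C$ lies in the smooth locus of $X$, this is the blow-up of a smooth curve in a smooth threefold and $K_{\widetilde X}=\sigma^*K_X+E$. The generic fibre $\widetilde X_\eta$ is a degree-$1$ del Pezzo surface over $K=\MC(\MP^1)$, with $E_\eta$ a distinguished $(-1)$-curve. The linear system $|-2K_{\widetilde X_\eta}|$ realises $\widetilde X_\eta$ as a double cover of a quadric cone, whose Galois involution $\widetilde\beta$ is the Bertini involution: biregular on $\widetilde X_\eta$, fixing $K_{\widetilde X_\eta}$, and acting as $-\mathrm{id}$ on the lattice $K_{\widetilde X_\eta}^{\perp}\subset\Pic(\widetilde X_\eta)$. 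Spreading out over $\MP^1$, $\widetilde\beta$ produces a birational involution of $\widetilde X$ which is biregular in codimension one and commutes with $\pi\circ\sigma$; contracting $E$ then yields $\chi_C\colon X\dashrightarrow X$, automatically square with respect to $\pi$.

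Next I analyse the action on $\CM$. With $m=\mult_C\CM$, the log pullback
\begin{align*}
\sigma^*\bigl(K_X+\tfrac{1}{n}\CM\bigr)=K_{\widetilde X}+\tfrac{1}{n}\widetilde\CM+\tfrac{m-n}{n}E
\end{align*}
shows the pair is canonical at $C$ iff $m\lem n$, so we may assume $m>n$. Since $E\cdot(-K_{\widetilde X_\eta})=1$, Bertini acts by $\widetilde\beta^*E\equiv 2(-K_{\widetilde X})-E$ modulo classes pulled back from $\MP^1$. Inserting this into $\widetilde\CM\equiv n(-K_{\widetilde X})+(n-m)E+lnF$ and descending by $\sigma$ gives
\begin{align*}
\chi_C^{-1}(\CM)\subset\bigl|-n'K_X+n'l'F\bigr|,\qquad n'=3n-2m,
\end{align*}
for some $l'\in\MQ$, with $\mult_C\chi_C^{-1}\CM=\max\{0,4n-3m\}$. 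Intersecting two general members of $\CM|_F$ on a general fibre gives $m^2\lem(-nK_X|_F)^2=2n^2$, so $m\lem n\sqrt{2}<3n/2$; hence $n'>0$, and the arithmetic inequality $\max\{0,4n-3m\}\lem 3n-2m$ (valid whenever $m\gem n$) yields $\mult_C\chi_C^{-1}\CM\lem n'$. Combined with $n'<n$, this is the required canonicity at $C$.

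The main obstacle will be verifying that $\widetilde\beta$ extends to a genuine biregular involution of $\widetilde X$ away from a set of codimension $\gem 2$, in particular across the finitely many degenerate fibres of $\pi$ where the branch quartic becomes singular. The standard remedy is to pass to a $\MQ$-factorial small modification of $\widetilde X$ on which $\widetilde\beta$ is a morphism, check that the ramification and discrepancy data are compatible fibre-by-fibre, and then descend to $X$ by contracting $E$. A secondary technical point is to track the fibre-class correction in $\widetilde\beta^*E\equiv 2(-K_{\widetilde X})-E+cF$ in order to compute $l'$ explicitly, though only the existence of $l'\in\MQ$ is needed for canonicity.
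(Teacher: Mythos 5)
The paper takes this proposition directly from Pukhlikov \cite[Section~3]{Pukh123} without reproving it, and your construction — blowing up the section $C$, applying the fibrewise Bertini involution on the resulting degree-$1$ del Pezzo, and reading off $n'=3n-2m$ from the $-\mathrm{id}$ action on $K^{\perp}$ — is exactly the argument of that source. One tightening worth noting: the hedge $\max\{0,4n-3m\}$ is unnecessary, since $\beta^*E_\eta\equiv -2K_{\widetilde X_\eta}-E_\eta$ is itself a $(-1)$-curve with $E_\eta\cdot\beta^*E_\eta=3$, so nefness of the mobile strict transform $\widetilde{\CM}_\eta\equiv -nK_{\widetilde X_\eta}-(m-n)E_\eta$ forces $\widetilde{\CM}_\eta\cdot\beta^*E_\eta=4n-3m\gem 0$ outright, which also gives the sharper a priori bound $m\lem\tfrac{4}{3}n$ rather than $m\lem n\sqrt{2}$; the remaining points you flag (extension of $\beta$ across the degenerate fibres, the value of $l'$) are the technical content of the cited reference but do not affect the generic-fibre multiplicity computation that produces the untwisting.
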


\begin{Cor}[{\cite[Section~3]{Pukh123}}] \label{No-Curves}
There is a birational map $\varphi \colon X \dasharrow X$ such that for the linear system $\varphi(\CM) \subset \big| -n^\prime K_X + n^\prime l^\prime \big|$ the pair $(X,\frac{1}{n^\prime} \varphi(\CM))$ is canonical at curves in $X \setminus \ON{Sing} X$ and $\varphi = \varphi_{C_1} \circ \dots \circ \varphi_{C_m}$, where $C_i$ are sections of $X$.
\end{Cor}
\begin{proof}
Apply Proposition \ref{Exc-Curves} and then apply Proposition \ref{Untwist-Curves} as many times as necessary.
The process terminates since for every application of Proposition \ref{Untwist-Curves} we have $n^\prime < n$.
\end{proof}

In what follows, we write $\chi$ instead $\chi \circ \varphi^{-1}$ to simplify notation, and safely assume it is canonical at curves in the smooth locus. In the remainder of this article, we show that points and curves passing through singular points are either excluded or untwisted, as explained in the following steps.

\subsection{Step 2: singular points of $X$ and curves passing through them}

Singular points of $X$ are cyclic quotient singularities. By the famous result of Kawamata on extremal extractions from such points, if the centre of non-canonicity contains the singular point then it is only that point and cannot be a curve (see Section \ref{Involution}). So it remains to study the singular points as centres.  In Section \ref{Involution} we construct birational maps to other models of del Pezzo fibration $\varphi_I: X \dasharrow X_I$, with the diagram
\begin{displaymath}
\xymatrix
{ 
	X\ar@{-->}[r]^{\varphi_I} \ar[d]_{\pi} & \ar[d]^{\pi_I} X_I \\
	\MP^1 \ar@{=}[r]^g & \MP^1,
}
\end{displaymath}
being commutative.
Let $\CM_I \subset \big| -n K_{X_I} + n l_I F \big| $ be the linear system corresponding to the map $\chi \circ \varphi_I$.
We prove that the pair $(X_I, \frac{1}{n} \CM_I)$ may only be non-canonical at nonsingular points of $X_I$ for appropriate choice of $I$.
Also, we show that by construction $X_I$ satisfies the generality conditions in the Main Theorem. This untwists $\chi$ for singular points.
From that point on, with an abuse of notation, instead of $\chi$ we work with $\chi \circ \varphi_I$ and call it $\chi$.

\subsection{Step 3: non-singular points of $X$}
After Step 1 and 2 we conclude that if $(X,\frac{1}{n}\CM)$ is not canonical at a subvariety $B$, then $B$ is a point and $X$ is smooth at $B$.
This may indeed occur if $\chi$ is square birational.
If we assume that $\chi$ is not square birational, then we can use Proposition \ref{supermaximal} to get a stronger requirement: $(X,\frac{1}{n}\CM - \gamma F)$ is not canonical at $B$, where $F$ is a fibre  containing $B$ and $\gamma$ is a number depending on $\CM$. Recall that there are lines in the fibres that contain singular points of $X$ which intersect $-K_X$ with $\frac{1}{2}$. If $B$ does not lie on a such line, in particular when $F$ does not contain singularities of $X$, then Corti's inequality together with Proposition \ref{supermaximal} give us the necessary contradiction to exclude $B$.
This is the approach of \cite{Corti} and we cover it in Section \ref{super-max}.
Sections \ref{staircase}, \ref{multstaircase}, and \ref{SMupstairs} are entirely devoted to the remaining case: excluding the possibility that the centre is a nonsingular point on a curve in a fibre  intersecting $-K_X$ by $\frac{1}{2}$. As the reader expects, this is the most complicated case. We combine the approach of \cite{Corti} with the ``staircase'' used for cubic fibrations in \cite[Sections~6-7]{Pukh123} to exclude these nonsingular points. 

\section{Square maps from $\frac{1}{2}(1,1,1)$-points} \label{Involution}

In this section we explicitly work out the square birational maps produced by blowing up the singular points.
Let $X$ be the hypersurface of bi-degree $(\ell,4)$ given by the equation
$$
f(u,v) w^2 + q(x,y,z;u,v) w + r(x,y,z;u,v)=0. 
$$
in the toric variety $T=\MP(1_x,1_y,1_z,2_w)_{(0,a,b,c)}/\MP^1_{u:v}$.

\subsection{Simple case: $X$ has one singular point}\label{simple-case}
First, for simplicity suppose $f(u,v)=u$.
Then the fibre $F$ given by $u = 0$ contains the singular point of $X$.
We now describe each map of the following diagram

\begin{displaymath}
\xymatrix
{ 
	\widetilde{U}\ar@{-->}[r]^\psi \ar[d]_{\sigma} & \ar[d]^{\bar{\sigma}} \overline{U} \\
	U\ar@{-->}[r]^\varphi & U.
}
\end{displaymath}

The variety $U$ is the open subset of $X$ given by $v \neq 0$. It can be viewed as the locus $\{f(u,1) w^2 + q(x,y,z;u,1) w + r(x,y,z;u,1)=0\}$ inside $\MP(1_x,1_y,1_z,2_w)\times\mathbb{C}^1_u$.
The map $\sigma$ is the Kawamata blow up of $U$ along the $\frac{1}{2}(1,1,1)$-point \cite{Kaw}.
We may describe $\widetilde{U}$ as a hypersurface in $\widetilde{V}$, where $\widetilde{V}$ is a quasi-projective toric variety with $\Cox \widetilde{V} = \MC[x,y,z,w,u,\bar{u}]$.
The grading of this ring is given by the matrix
$$
\left(\begin{array}{ccccccc}
u & x & y & z & w & \bar{u}     \\
0 & 1 & 1 & 1 & 2 &     0        \\
2 & 1 & 1 & 1 & 0 &   -2 
\end{array}\right)
$$
and irrelevant ideal $(u,x,y,z)\cap(w,\bar{u})\cap(x,y,z,w)$; see \cite[\S3.2]{Ahm-Cr} for a general treatment of cox rings of blowups of rank 2 toric varieties. Note that we are looking at the open set $\{v\neq 0\}$, hence the rank drops to 2 and the ideal simplifies. We may well-form (\cite[\S2]{Ahm-Cr}) the matrix above and rewrite it as
$$
\left(\begin{array}{ccccccc}
u & x & y & z & w &  \bar{u}     \\
0  & 1 & 1 & 1 & 2 & 0        \\
-1 & 0 & 0 & 0 & 1 &  1 
\end{array}\right).
$$

The equation of $\widetilde{U}$ is
$$
u w^2 + q(x,y,z; \bar{u} u) w + \bar{u} r(x,y,z; \bar{u} u)=0,
$$
where $q(x,y,z; u) = q(y,x,z; u,1)$ and $r(x,y,z;u)=r(x,y,z;u,1)$.
To produce a Sarkisov link on $\widetilde{U}$, we run a 2-ray game on the ambient space ($\widetilde{V}$ to begin with) and restrict it to the 3-fold \cite{AZ}. Note that the 2-ray game corresponds to the variation of GIT, inevitably changing $(u,x,y,z)\cap(w,\bar{u})$ part in the irrelevant ideal. The component $(x,y,z,w)$ in the ideal will remain unchanged, and preserves the fibres (the 2-ray game is relative over the base of the fibration). The first step of the game on $\widetilde{V}$ does not produce a 2-ray game on $\widetilde{U}$, caused by the fact that the equation of the hypersurface belongs to the irrelevant component $(w,\bar{u})$, hence we are forced to do an unprojection (preserving $\widetilde{U}$). This will embed $\widetilde{U}$ in the toric variety defined by the Cox ring $\MC[x,y,z,w,u,\bar{u}]$ graded by 
$$
\left(\begin{array}{cccccccc}
u & t & x & y & z & w &  \bar{u}     \\
0  & 2 & 1 & 1 & 1 & 2 & 0        \\
-1 & -1 & 0 & 0 & 0 & 1 &  1 
\end{array}\right).
$$
The irrelevant ideal is $(u,t,x,y,z)\cap(w,\bar{u})\cap(x,y,z,w,t)$ and $\widetilde{U}$ is a complete intersection of two hypersurfaces:
$$
u w + q(x,y,z; \bar{u} u) = \bar{u}t 
\quad\text{and}\quad 
-wt=r(x,y,z; \bar{u} u)
$$
The toric 2-ray game now restricts to a 2-ray game on $\widetilde{U}$. 
The first step is the flop of $8$ lines, which corresponds to the $8$ solutions of $\{q(x,y,z,0)=r(x,y,z,0)=0\}\subset\MP^2$, mapping $\widetilde{U}$ to $\bar{U}$ by crossing the $(1,0)$ wall.
Recall that the solutions of $\{q(x,y,z,0)=r(x,y,z,0)=0\}$ are distinct because of the generality conditions in the Main Theorem.

The map $\bar{\sigma}$ is defined as
$$
(x,y,z,w,t,u,\bar{u}) \mapsto (u^{\frac{1}{2}}x,u^{\frac{1}{2}}y, u^{\frac{1}{2}}z, w, u t, u \bar{u}).
$$
It contracts the proper transform of $F$ into a $\frac{1}{2}(1,1,1)$-point.
Thus we see that the equations of the image of $\bar{\sigma}$ are
$$
\bar{u} t = w + q(x,y,z; \bar{u})
\quad\text{and}\quad
w t + r(x,y,z; \bar{u}) = 0.
$$
We can eliminate the variable $w$, thus we have only one equation
$$
\bar{u} t^2 - q(x,y,z;\bar{u}) w + r(x,y,z;\bar{u})=0.
$$
Clearly, the this variety is isomorphic to $U$. 
Thus the map $\varphi$ is a birational automorphism.
We may write the map $\varphi$ in coordinates as
$$
(x,y,z,w,u) \mapsto (x,y,z,-w+\frac{q(x,y,z;u)}{u};u)
$$
In particular we see that $\varphi$ is a local involution. 


\subsection{Birational models of $X$: square birational maps}\label{square-models}

Note that the description of maps and models in the simple case in Subsection~\ref{simple-case} works out for any calculations around the local ring near the point in the image of the singular point at the base curve. We use this to describe all models $X_I$.

Embed $X$ into $V$, a toric $\MP(1,1,1,2,2)$-bundle over $\MP^1$, as a complete intersection as follows. Let $N$ be the degree of $f \in \MC[u,v]$. Let the weight matrix of the toric variety $V$ be
$$
\left(\begin{array}{ccccccc}
v & u & x & y & z & w & s     \\
0 & 0 & 1 & 1 & 1 & 2 & 2        \\
1 & 1 & 0 & a & b & c & c+N 
\end{array}\right)
$$
with irrelevant ideal $(u,v)\cap(x,y,z,w,s)$, and
suppose the equations of $X$ are
\begin{align*}
&s = f(u,v) w + q(x,y,z;u,v),
&s w + r(x,y,z;u,v) = 0.
\end{align*}

Note that the variable $s$ can be eliminated altogether using the first equation, and then what we are left with is the model presented in Subsection\,\ref{models-subs}.

The quasi-smoothness of $X$ implies that $f$ has $N$ distinct zeros, hence it can be written in the form $f(u,v) = l_1(u,v) \dots l_N(u,v)$, a product of distinct linear forms.

Following the Sarkisov link described in Subsection~\ref{simple-case}, we can construct $X_{\{1\}}$, the variety obtained at the end of the Sarkisov link starting from the Kawamata blow up of the singular point in the fibre $l_1=0$:

The 3-fold $X_{\{1\}}$ is defined as a complete intersection defined by equations
$$
l_1s = l_2\dots l_N w + q(x,y,z;u,v)
\quad\text{and}\quad
s w + r(x,y,z;u,v) = 0.
$$
in the toric 5-fold $V_1$ with grading
$$
\left(\begin{array}{ccccccc}
v & u & x & y & z & w & s     \\
0 & 0 & 1 & 1 & 1 & 2 & 2        \\
1 & 1 & 0 & a & b & c+1 & c+N-1 
\end{array}\right)
$$

Note that the irrelevant ideal remains unchanged. Similarly $X_I\subset V_{\big| I \big|}$ is defined by
\begin{align*}
\prod_{i \in I} l_i s = \prod_{j \not\in I} l_j w + q(x,y,z;u,v)
\quad\text{and}\quad
s w + r(x,y,z;u,v) = 0.
\end{align*}
where $V_{\big| I \big|}$ is graded by
$$
\left(\begin{array}{ccccccc}
v & u & x & y & z & w & s     \\
0 & 0 & 1 & 1 & 1 & 2 & 2        \\
1 & 1 & 0 & a & b & c + \big|I\big| & c + N - \big|I\big|
\end{array}\right).
$$
At last we denote by $\pi_I: X_I \to \MP^1_{u,v}$ the restriction of the projection $V_{\big| I \big|} \to \MP^1_{u,v}$.

While this defines the birational maps between, say $X_I$ and $X_{I\cup\{j\}}$ for $j\notin I$, the description of this birational map as an elementary Sarkisov link is identical to the case in Subsection\,\ref{simple-case}, after a suitable change of coordinates.

\begin{Remark}
The extremal contraction $\widetilde{X} \to X$ is constructed so that the image of the exceptional locus is a terminal quotient singularity \cite[Theorem~5]{Kawamata}, and locally is the Kawamata blow up of that point. This is a unique extremal extraction according to Kawamata. Then the 2-ray game is followed by a simple Atiyah flop, in particular it does not change the singularities of $\widetilde{X}$. In short, the links we described here are unique elementary Sarksiov links starting at $\frac{1}{2}(1,1,1)$-points and, as it can be seen from the coordinate description of the birational map, the links we constructed are birational involutions in analytic neighbourhood of the central fibre.
In particular it follows that $\pi_I: X_I \to \MP^1_{u,v}$ is a del Pezzo fibration.
\end{Remark}

\begin{Remark} It is clear from the symmetric equation of $X_I$ described above that $X_I\cong X_{J}$, where $J=\{1,\dots,N\}\backslash I$. And, in particular, $X_{\{1,\dots,N\}}\cong X$.\end{Remark}

\subsection{Canonicity at $\frac{1}{2}(1,1,1)$-point}
Let $\CM \subset \big| -n K_X + l F \big|$ be the mobile linear system associated to a birational map $\chi \colon X \dasharrow Y$ to a total space $Y$ of a Mori fibre  space.
Let $\mathcal{M}_I$ be the proper transform of $\mathcal{M}$ on $X_I$.
We now prove that there is an $I \subset \{1,\dots,N\}$ for which the linear system $\mathcal{M}_I$ is canonical at $\frac{1}{2}(1,1,1)$-points and at curves passing through them.

\begin{Prop}[{\cite[Theorem~5]{Kawamata}}] \label{PropKawamata}
Let $g:\widetilde{X}\to X$ be the blow up at a $\frac{1}{2}(1,1,1)$-point $Q$, let $D$ be an effective $\MQ$-divisor on $X$, and let $E_Q$ be the exceptional divisor of $g$. 
Then a pair $(X,D)$ is canonical at $Q$ if and only if it is canonical at $\nu_{E_Q}$, that is $a(E_Q,X,D) \gem 0$.
\end{Prop}

\begin{Cor} \label{CorHPCurve}
Suppose the pair $(X,D)$ is not canonical at a curve $C$ passing through the $\frac{1}{2}(1,1,1)$-point $Q$. 
Then the pair $(X,D)$ is not canonical at $Q$.
\end{Cor}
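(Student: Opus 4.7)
My plan is to verify $a(E_Q, X, D) < 0$ and then invoke Proposition~\ref{PropKawamata}. The main ingredient will be a multiplicity bound extracted from the hypothesis. Since $Q$ is an isolated singular point of $X$, the generic point of $C$ lies in the smooth locus of $X$. The standard characterisation of non-canonicity at a curve in the smooth locus (testing canonicity on the divisorial valuation obtained by blowing up $C$ near a general point, which carries discrepancy $1 - \mult_C D$) then forces $\mult_C D > 1$.

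Next, I would work analytically near $Q$, where $X \cong \MC^3/\MZ_2$ with $\MZ_2$ acting by $-\mathrm{id}$. Let $\pi\colon \MC^3 \to X$ be the local quotient map and set $\bar D = \pi^\ast D$, $\bar C = \pi^{-1}(C)$. Since $\pi$ is \'etale away from the origin, $\mult_{\bar C} \bar D = \mult_C D > 1$; and because $\bar C$ passes through $0 \in \MC^3$, we get $\mult_0 \bar D \geqslant \mult_{\bar C} \bar D > 1$. The Kawamata blow up of $Q$ is the weighted blow up with weights $\frac{1}{2}(1,1,1)$ in the orbifold chart, so $\nu_{E_Q}(D) = \frac{1}{2}\mult_0 \bar D > \frac{1}{2}$, while the discrepancy of $E_Q$ itself is $a(E_Q,X,0) = \frac{1}{2}$. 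Combining gives $a(E_Q,X,D) = \frac{1}{2} - \nu_{E_Q}(D) < 0$, so the pair is not canonical at the valuation $\nu_{E_Q}$, and Proposition~\ref{PropKawamata} yields the conclusion.

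The only mildly subtle step is the opening implication $\mult_C D > 1$ from non-canonicity at $C$; this is a classical feature of singularities of pairs for curves in the smooth locus (and is used routinely in the work of Pukhlikov and Corti), so I expect no genuine obstacle there. All other steps amount to the explicit description of the Kawamata blow up as a weighted blow up and a direct pullback-of-multiplicities computation through the local double cover.
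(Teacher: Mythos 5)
Your argument is correct and is the one the corollary tacitly calls for: the paper states it without proof, as an immediate consequence of Proposition~\ref{PropKawamata}, and your chain (non-canonicity along $C$ in the smooth locus $\Rightarrow \mult_C D > 1$ by the transversal slice; pull back through the local $\mu_2$-cover and use upper semicontinuity of multiplicity to get $\mult_0 \bar D > 1$; read off $\nu_{E_Q}(D) = \tfrac{1}{2}\mult_0\bar D > \tfrac{1}{2} = a(E_Q,X,0)$; conclude via Proposition~\ref{PropKawamata}) supplies exactly the intended computation.
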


Because of this corollary we only concentrate on $\frac{1}{2}(1,1,1)$ being the centre of non-canonicity.
Note that it is possible for the pair $(X,\frac{1}{n} \CM)$ to be non-canonical at a singular point of $X$. What is true, and proven here, is that there always exist a model $X_I$ pf $X$ for which the pair $(X_I,\frac{1}{n} \CM_I)$ is canonical at singular points.

\begin{Prop}
Let $\pi: X \to \MP^1$ be a del Pezzo fibration as in the Main Theorem.
Let $\CM \subset \big| -n K_X + l F \big|$ be the mobile linear system associated to a birational map $\chi \colon X \dasharrow Y$ to the total space $Y$ of a Mori fibre space. Denote singular points of $X$ by $Q_1,\dots,Q_N$. Let $I \subset \{1,\dots,N\}$ be the maximal subset of indices such that the pair $(X, \frac{1}{n} \CM)$ is not canonical at $Q_i$ for $i \in I$. Let $\CM_I$ be the proper transform of $\CM$ on $X_I$. Then the pair $(X_I, \frac{1}{n} \CM_I)$ is canonical at all  singular points of $X_I$.
\end{Prop}
\begin{proof}
We prove it for $I = \{1\}$, the rest follows by induction. Suppose $\varphi\colon X\dashrightarrow X'=X_I$ is the birational transform described earlier starting from the blowup of the $\frac{1}{2}$ point $Q_1$, which factors as

\begin{displaymath}
\xymatrix
{ 
	\widetilde{X}\ar@{-->}[r]^{\psi} \ar[d]_{f} & \ar[d]^{f_I} \widetilde{X_I} \\
	X\ar@{-->}[r]^{\varphi} & X_I
	}
\end{displaymath}
with $f$ and $f_I$ being the Kawamata blowups of the singular points and $\psi$ the flopping map between $\widetilde{X}$ and $\widetilde{X_I}$. 
We have the following relations
\[\CM\sim -n K_X + l F\quad\text{and}\quad \CM_I\sim-n K_{X_I} + l_I F_I,\]
\[\widetilde{\CM}=f^*\CM-\nu E\quad\text{and}\quad \widetilde{\CM_I}=g^*\CM-\nu_I E_I,\]
\[f^*F = f_*^{-1} F + E \quad\text{and}\quad f_I^*F_I = (f_I)_*^{-1} F_I + E_I ,\]
where $E$ and $E_I$ are the exceptional divisors of $f$ and $f_I$, and $\widetilde{\CM}$ and $\widetilde{\CM_I}$ are strict transforms of $\CM$ and $\CM_I$ with multiplicities $\nu$, $\nu_I$ positive integers.

Let $g\colon V \to \widetilde{X}$ and $h\colon V \to \widetilde{X_I}$ be the resolution of the flopping map $\psi$.
Denote
\[
F_V = (f\circ g)^*(F) = (f_I\circ h)^*(F_I), \quad \text{and} \quad \CM_V = (f\circ g)_*^{-1}(\CM) = (f_I\circ h)_*^{-1}(\CM_I).
\]
Then we have the following equivalence
\begin{align*}
l F_V + (\frac{1}{2} - \frac{\nu}{n}) g_*^{-1}(E) + \sum a_i E_i \sim K_V + \frac{1}{n} \CM_V\sim l_I F_V + (\frac{1}{2} - \frac{\nu_I}{n}) h_*^{-1}(E_I) + \sum b_i E_i
\end{align*}
for some rational $a_i$ and $b_i$.
Since $\psi$ is small, the divisors $E_i$ in the sums on the left-hand side and on the right-hand side are the same.
We rearrange the terms:
\begin{equation} \label{can-eq-1}
(l-l_I)F_V \sim (\frac{1}{2}-\frac{\nu_I}{n})h_*^{-1}(E_I)+(\frac{\nu}{n}-\frac{1}{2})g_*^{-1}(E) + \sum (b_i - a_i) E_i.
\end{equation}
On the other hand we have
\begin{equation} \label{can-eq-2}
F_V \sim g_*^{-1}(E)+ h_*^{-1}(E_I) + \sum c_i E_i
\end{equation}
for some $c_i$.

It follows from linear independence of $E_i, g_*^{-1}(E), h_*^{-1}(E_I)$ that equivalences (\ref{can-eq-1}) and (\ref{can-eq-2}) are proportional, in particular we have
\[
\nu + \nu_I = n.
\]
It implies that at most one of $(X, \frac{1}{n} \CM)$ and $(X_I, \frac{1}{n} \CM_I)$ can be non-canonical at those particular singular points.
\end{proof}

\begin{Remark}
Note that $\CM_I \subset \big|-n K_{X_I} + l_I F_I \big|$ for some $l_I$, that is $n$ does not change.
This is the case since the maps $\varphi_I$ induce isomorphisms on general fibres.
\end{Remark}

\begin{Lemma}
All the varieties $X_I$ satisfy the generality conditions in the Main Theorem.
\end{Lemma}
\begin{proof}
The maps $\varphi_I$ are compositions of maps $\varphi_i$, where the map $\varphi_i$ is a birational involution of the analytical neighbourhood of the fibre containing $Q_i$.
Thus the fibres, containing singularities on $X_I$ are isomorphic to the fibres containing singularities on $X$. Note that quasi-smoothness of $X_I$ can be checked explicitly. The ambient space is singular along the surface $\{x=y=z=0\}$, which is a $\mathbb{P}(2,2)$-bundle over $\mathbb{P}^1$. This is cut out by $X$ at the solutions to $sw=(\prod_{i \in I} l_i)s - (\prod_{j \not\in I} l_j) w=0\}$, which is a finite set of points. Locally near each singular point $s$ or $w$, as well as a local parameter in the base, appears as linear term (because $l_i$'s are distinct), hence can be eliminated so that the analytic type of singularity at each point is $\frac{1}{2}(1,1,1)$. There are no other singularities by the generality assumptions on $X$.
\end{proof}

\section{Supermaximal singularities}\label{super-max}
In this section we establish a framework for excluding smooth points. 
We follow the approach of Pukhlikov \cite{Pukh123}.
We exclude the points that lie on on $\pi$-vertical curves intersecting $-K_X$ by $\frac{1}{2}$ in sections \ref{staircase}, \ref{multstaircase}, \ref{SMupstairs}, and all the other points in Proposition \ref{PrCurveCenter}.

By Corollary \ref{No-Curves} we may assume that the pair $(X, \frac{1}{n} \mathcal{M})$ is canonical at curves in the smooth locus.
In Section \ref{SecRig} and Section \ref{Involution} we have shown that there is a model $X_I$ for which the pair $(X_I, \frac{1}{n} \mathcal{M}_I)$ is canonical at all the singularities of $X_I$.
By Corollary \ref{CorHPCurve} the pair $(X_I, \frac{1}{n} \mathcal{M}_I)$ is also canonical at curves passing through the singular locus of $X_I$.
Thus if the pair $(X_I, \frac{1}{n} \mathcal{M}_I)$ is not canonical at $B$, then $B$ is a point and $X$ is smooth at $B$. 
This allows us to use the following result.

\begin{Prop}[{\cite[Proposition~2.7]{Okada18}}] \label{supermaximal}
Let $\pi: X \to \MP^1$ be a del Pezzo fibration.
Suppose that we are given a non-square birational map $\chi\colon X \dasharrow Y$ to a Mori fibre space $\pi_Y \colon Y \to Z$ and let $\CM \subset \big| -nK_X + l n F\big|$ be a mobile linear system associated to $\chi$.
Suppose in addition that the pair $(X, \frac{1}{n} \CM)$ is canonical at curves on $X$ and $l \gem 0$. 
Then there exist divisorial valuations $\nu_i$ centred at points $P_1,\dots,P_k$ of $X$ contained in distinct $\pi$-fibres and there exist positive rational numbers $\gamma_1,\dots,\gamma_k$ with the following properties:
\begin{itemize}
\item
$(X,\frac{1}{n} \CM - \sum \gamma_j F_j)$ is not canonical at $\nu_1,\dots,\nu_k$, where $F_i$ is the $\pi$-fibre containing $P_i$.
\item
$\sum_{j=1}^{k} \gamma_j > l$.
\end{itemize}
\end{Prop}

We combine this proposition with Corti's inequality to derive a contradiction from the existence of a non-square birational map $\chi\colon X \dasharrow Y$.
Note that $l \gem 0$ since $X_I$ satisfies the $K^2$-condition.
This is why we require the total $K^2$-condition in the Main Theorem.

We now introduce the notions and notations necessary for application of Corti's inequality.
First, we simplify the notations by dropping the index $I$, that is from now on we denote $X_I$ by $X$, $\pi_I$ by $\pi$, etc.
Let $D_1$, $D_2\in\mathcal{M}$ be general members and consider the effective $1$-cycle $Z=D_1\cdot D_2$.
For some point $P$ we find an upper bound on $\mult_P Z$ using the degrees and a lower bound using Corti's inequality. 

We decompose $Z=Z^v+Z^h$ into the vertical and the horizontal components with relation to the fibration $\pi$. 
For the vertical components we have the decomposition
\begin{align*}
Z^v=\sum_{t\in \MP^1}Z^v_i + Z^{\prime},
\end{align*}
where the support of $Z^v_i$ is contained in $F_i$ from Proposition \ref{supermaximal} and the support of $Z^\prime$ is disjoint from $F_1,\dots,F_k$. 
We define the degree of a vertical $1$-cycle $C^v$ by the number $\deg C^v=C^v\cdot(-K_X)$ and the degree of a horizontal $1$-cycle $C^h$ by the number $\deg C^h=C^h\cdot F$. 
Let $f$ be the class of a line in a fibre, then $\deg f=1$.

Now we look for the pair, and the point, at which we apply Corti's inequality. 

\begin{Lemma}[{\cite[Corollary~4.8]{Kr16}}] \label{CorSMSing}
There is an index $i$ such that 
\begin{align*}
\deg Z^v_i < 4n^2\gamma_i.
\end{align*}
\end{Lemma}

We say that $\nu_i$ is a \emph{supermaximal singularity} if $\deg Z_i^v<4n^2\gamma_i$, and clearly
Lemma \ref{CorSMSing} implies that a supermaximal singularity exists.
Fix such a supermaximal singularity $\nu_i$. To simplify the notations, from now on denote $F_i$ as $F$, $\gamma_i$ as $\gamma$, $P_i$ as $P$, $Z^v_i$ as $Z_v$, $Z^h$ as $Z_h$, and $\nu_i$ as $\nu$.

\begin{Lemma}[{\cite[Proposition~1.1]{Pukh123}}]
Let $P$ be a nonsingular point and let $C$ be a vertical curve on $X$. 
Suppose $\deg C \gem 1$, then $\mult_P C \lem \deg C$.
\end{Lemma}
\begin{proof}
For a sufficiently big integer $a$ the base locus of the linear system $\big|-K_X + aF \big|$ consists of singular points of $X$.
Let $\CH \subset \big|-K_X + aF \big|$ be the subsystem of divisors passing through $P$.
Then $\CH$ has a base curve $L$ if and only if $L$ is the vertical curve of degree $\frac{1}{2}$ passing through $P$. In particular if $\deg C \gem 1$, then $C$ is not in the base locus of $\CH$.
Thus for general $H \in \CH$ we have
$$
\mult_P C \lem H \cdot C = -K_X \cdot C = \deg C.
$$
\end{proof}

\begin{Prop}[{\cite[Section~4]{Pukh123}}] \label{PrCurveCenter}
Let $P$ be a nonsingular point on $X$.
Suppose the degree of every vertical curve passing through $P$ is at least $1$. 
Then $P$ cannot be the centre of a supermaximal singularity.
\end{Prop}
\begin{proof}
By definition of the degree of a horizontal cycle we have
$$
\mult_P Z_h \lem Z_h \cdot F = \deg Z_h = D^2 \cdot F = 2 n^2
$$ 
for a general divisor $D \in \CM$.

Suppose $P$ is the centre of a supermaximal singularity and denote by $F$ the fibre containing $P$. 
A general member $H \in \CH \big\vert_F$ does not share components with $Z_v$ since $\CH\vert_F$ does not have fixed components.
Thus we see that
$$
\mult_P Z_v \lem D \cdot Z_v = \deg Z_v < 4 n^2 \gamma.
$$
On the other hand the pair $(X,\frac{1}{n}\mathcal{M}-\gamma F)$ is not canonical at the point $P$ by Proposition \ref{supermaximal}.
Hence by Corti's inequality there is a number $0 < t \leqslant 1$ such that 
\begin{align*}
\mult_P Z_h + t \mult_P Z_v\geqslant 4n^2(1+\gamma t \mult_P F) > 4(1+\gamma t)n^2.
\end{align*}
Combining the bounds we get a contradiction.
\end{proof}

\begin{Remark} \label{WhyStaircase}
Let $F$ be a fibre not containing a singular point of $X$. Then every curve in $F$ has a positive integer degree and Proposition \ref{PrCurveCenter} applies for every $P \in F$.
Let $F$ be a fibre containing a $\frac{1}{2}(1,1,1)$-point.
Then there are curves on $F$ which intersect $-K_X$ by $\frac{1}{2}$.
If $F$ satisfies the generality condition in the Main Theorem then there are exactly $8$ such curves.
If $P$ lies on such curve, then the bound on multiplicity of $Z_v$ at $P$ becomes $\mult_P Z_v < 8 n^2 \gamma$ which is not enough to lead to a contradiction.
Sections \ref{staircase}, \ref{multstaircase}, and \ref{SMupstairs} are entirely devoted to dealing with this issue.
\end{Remark}

\section{Construction of the staircase in the generic case} \label{staircase}

Let $X^{(0)}$ be a projective threefold, let $F^{(0)}\subset X^{(0)}$ be a surface, let $L_0\subset F^{(0)}$, $L_0 \cong \MP^1$, and suppose $X^{(0)}$ and $F^{(0)}$ are smooth in the neighbourhood of $L_0$.
Let us associate the following construction to $L_0$ which goes by the name \emph{staircase}.
The notion of the staircase has been introduced by Pukhlikov in \cite[Section~6]{Pukh123}. 
The staircase associated to $L_0$ is the following chain of morphisms which we define inductively
\begin{displaymath}
\xymatrix
{ 
	X^{(M)} ~ \ar[r]^{\sigma_M} & ~ X^{(M-1)} ~ \ar[r]^{\sigma_{M-1}} & ~ \dots ~ \ar[r]^{\sigma_1} &  ~ X^{(0)}.
}
\end{displaymath}

The morphism $\sigma_i:X^{(i)}\to X^{(i-1)}$ is the blow up at $L_{i-1}$ and we denote $E^{(i)}$ to be its exceptional divisor.
Clearly for every $i$ we have that $E^{(i)}\cong\mathds{F}_m$, for some $m$. 
If $m>0$, then let $L_i$ be the exceptional section of $E^{(i)}$.
If $m=0$, then let $L_i$ be a line from the ruling of $E^{(i)} \cong \MP^1 \times \MP^1$ horizontal with respect to the $\MP^1$-fibration $\sigma_i\vert_{E^{(i)}}$.
Denote the proper transform of $E^{(i)}$ on $X^{(j)}$, as $E^{(i,j)}$ and the proper transform of $F^{(0)}$ on $X^{(j)}$ as $F^{(j)}$.
Let $f_i \in A^2(X^{(i)})$ be the class of a fibre of the ruled surface $E^{(i)}$. 

\begin{Th} \label{ThStaircase}
Let $X^{(0)}$ be a  projective threefold and let $F^{(0)}$ be a surface in it and suppose $L_0$ is a smooth rational curve in $F^{(0)}$. Suppose $X^{(0)}$ and $F^{(0)}$ are smooth in the neighborhood of $L_0$. Also assume that $L_0\cdot K_{X^{(0)}}=0$ and $L_0 \cdot F^{(0)}=-1$. Then for the staircase associated to $L_0$ the following assertions are true:
\begin{enumerate}[(i)]
	\item $E^{(1)} \cong \MP^1 \times \MP^1$,
	\item $E^{(i)} \cong \mathds{F}_1$ for $i \gem 2$,
	\item $\sigma_1^*(L_{0})\equiv L_1-f_1$,
	\item $\sigma_i^*(L_{i-1}) \equiv L_{i}$ for $i \gem 2$,
	\item $L_i \subset E^{(1,i)}$ for all $i$,
	\item $E^{(i,i+1)}\vert_{E^{(i+1)}}$ is disjoint from $L_{i+1}$ for $i\geqslant 2$,
	\item $F^{(1)}$ either contains $L_1$ or is disjoint from it, and $\nu_{E^{(1)}}(F^{(0)}) = 1$,
	\item $F^{(i)}$ is disjoint from $L_i$ for $i \gem 2$, and $\nu_{E^{(i)}}(F^{(0)}) = i-1+\delta$, where $\delta = \mult_{L_1} F^{(1)}$.
\end{enumerate}
\end{Th}


\begin{center}
\begin{tikzpicture}[scale=0.7]
\draw (-10,0)--(-10,2);
\draw (-10,2)--(-8.5,3);
\node at (-9.5,2.6) {\tiny$F$};
\draw (-8.5,3)--(-8.5,1);
\draw (-8.5,1)--(-10,0);

\draw (-8.5,3)--(-6.5,3);
\node at (-6.8,3.2) {\tiny$E_1$};
\draw (-6.5,3)--(-6.5,1);
\draw(-6.5,1)--(-8.5,1);

\draw (-7.5,3)[red]--(-7.5,1);
\node at (-7.8,2) {\tiny$L_n$};

\draw (-7.5,3)--(-5.8,1.5);
\node at (-6,2.1) {\tiny$E_n$};
\draw (-5.8,1.5)--(-5.8,-0.5);
\draw (-5.8,-0.5)--(-7.5,1);

\draw (-5.8,1.5)--(-3.8,1.5);
\node at (-4.5,1.7) {\tiny$E_{n-1}$};
\draw (-3.8,1.5)--(-3.8,-0.5);
\draw (-3.8,-0.5)--(-5.8,-0.5);

\draw (-3.8,1.5)--(-3.6,1.7);
\draw (-3.8,-0.5)--(-3.6,-0.3);

\draw[dashed] (-3.6,1.7)--(-2.9,2.4);
\draw[dashed] (-3.6,-0.3)--(-2.9,0.4);

\draw (-2.8,2.5)--(-2.6,2.7);
\draw (-2.8,0.5)--(-2.6,0.7);
\draw (-2.6,2.7)--(-2.6,0.7);

\draw (-2.6,2.7)--(-0.6,2.7);
\node at (-0.8,2.9) {\tiny$E_2$};
\draw (-0.6,2.7)--(-0.6,0.7);
\draw (-0.6,0.7)--(-2.6,0.7);

\node  [align=flush center,text width=5.5cm] at (-4.7,-1) {\footnotesize The staircase when $\delta = 0$};

\draw (2,0)--(2,2);
\draw (2,2)--(3.5,3);
\node at (2.5,2.6) {\tiny$F$};
\draw (3.5,3)--(3.5,1);
\draw (3.5,1)--(2,0);

\draw (3.5,3)--(5,2);
\node at (4.7,2.5) {\tiny$E_2$};
\draw (5,2)--(5,0);
\draw (5,0)--(3.5,1);

\draw (5,2)--(6.5,3);
\node at (5.8,2.9) {\tiny$E_3$};
\draw (6.5,3)--(6.5,1);
\draw (6.5,1)--(5,0);

\draw (6.5,3)--(6.9,3);
\draw[dashed] (7,3)--(8,3);
\draw (8.1,3)--(8.5,3);
\draw (8.5,3)--(8.5,1);
\draw (8.5,1)--(8.1,1);
\draw[dashed] (7,1)--(8,1);
\draw (6.9,1)--(6.5,1);

\draw (8.5,3)--(10,2);
\node at (9.7,2.5) {\tiny$E_n$};
\draw[red] (10,2)--(10,0);
\node at(10.3,1.2) {\tiny$L_n$};
\draw (10,0)--(8.5,1);

\draw (10,2)--(11.5,3);
\node at (10.8,2.9) {\tiny$E_1$};
\draw (11.5,3)--(11.5,1);
\draw (11.5,1)--(10,0);

\node  [align=flush center,text width=5.5cm] at (7.3,-1) {\footnotesize The staircase when $\delta = 1$};

\end{tikzpicture}
\end{center}

We use the following result in the proof of the theorem.

\begin{Lemma}[{\cite[Lemma~2.2.14]{IPFano}}] \label{LemExcInt}
Let $\sigma:Y\to X$ be the blow up of a threefold $X$ at a smooth projective curve $C$.
Suppose also that $X$ is smooth along $C$.
Let $E$ be the exceptional divisor of $\sigma$, then $E$ is a projectivization of the normal bundle $N_{C/X}$. 
Denote the class of the fibre of $E$ as $f$, then the following equalities hold
\begin{enumerate}[(i)]
	\item $E^2=-\sigma^*(C)+\deg(N_{C/X})f$,
	\item $E^3=\deg(N_{C/X})$,
	\item $E\cdot f=-1$,
	\item $E\cdot \sigma^*(D)=(C\cdot D)f$, for some divisor $D$ on $X$,
	\item $f\cdot \sigma^*(D)=0$, for some divisor $D$ on $X$,
	\item $E\cdot \sigma^*(Z)=0$, for any $Z \in A^2(X)$,
	\item $\deg(N_{C/X})=2g(C)-2-K_X\cdot C$.
\end{enumerate}
\end{Lemma}

To prove Theorem \ref{ThStaircase} we combine the following lemmas.

\begin{Lemma} \label{LemStaircase1}
With the assumptions of Theorem \ref{ThStaircase}, we have
$\deg N_{L_0/X^{(0)}}=\mathcal{O}_{L_0}(-1)\oplus\mathcal{O}_{L_0}(-1)$.
In particular, $E^{(1)} \cong \MP^1 \times \MP^1$.
\end{Lemma}
\begin{proof}
Recall that $L_0$ is a smooth rational curve, thus Lemma \ref{LemExcInt} implies:
\begin{align*}
\deg N_{L_0/X^{(0)}} = 2g(L_0) - 2 - K_{X^{(0)}} \cdot L_0 = -2.
\end{align*}
Since $F^{(0)}\cdot L_0 = -1$ we have
$(N_{F^{(0)}/X^{(0)}})\big\vert_{L_0} \cong \mathcal{O}_{L_0}(-1)$.
There is the exact sequence
\begin{align*}
0\to N_{L_0/F^{(0)}} \overset{e}{\to} N_{L_0/X^{(0)}} \overset{s}{\to} (N_{F^{(0)}/X^{(0)}})\big\vert_{L_0} \to 0.
\end{align*}
For some $a$ and $b$ we have $N_{L_0/X^{(0)}} \cong \mathcal{O}_{L_0}(a)\oplus\mathcal{O}_{L_0}(b)$, without loss of generality we may assume that $a\lem  b$. 
Because $\deg N_{L_0/X^{(0)}} = -2$ we have $a + b = -2$ and since $(N_{F^{(0)}/X^{(0)}})\big\vert_{L_0} \cong \mathcal{O}_{L_0}(-1)$
we have $N_{L_0/F^{(0)}} \cong \mathcal{O}_{L_0}(-1)$.

Let $p_a$ be the projection $\mathcal{O}_{L_0}(a)\oplus\mathcal{O}_{L_0}(b) \to \mathcal{O}_{L_0}(a)$.
Suppose $p_a \circ e \colon \mathcal{O}_{L_0}(-1) \to \mathcal{O}_{L_0}(a)$ is a nontrivial map, then $b \gem a \gem -1$ and therefore $a = b = -1$.
Hence $a = -1$ and $b = -1$.

Suppose $p_a \circ e$ is trivial, then the map $s \vert_{\mathcal{O}_{L_0}(b)}$ is nontrivial and $b \lem -1$. Thus we have $b = -1$ and hence $a = b = -1$.
\end{proof}

\begin{Lemma} \label{LemStaircase2}
With the assumptions of the Theorem \ref{ThStaircase}, we have 
\begin{enumerate}[(i)]
	\item $\sigma_1^*(L_{0}) \equiv L_1 - f_1$,
	\item $E^{(1)}\cdot L_1=-1$,
	\item $K_{X^{(1)}} \cdot L_1 = -1$, and
	\item $E^{(2)} \cong \mathds{F}_1$.
\end{enumerate}
\end{Lemma}
\begin{proof}
By Lemma \ref{LemExcInt} we have
\begin{align*}
0 = E^{(1)} \cdot \sigma_1^*(L_{0}) = E^{(1)}\vert_{E^{(1)}} \cdot \sigma_1^*(L_{0}) = 
\big( -\sigma_1^*(L_{0})-2f_1 \big) \cdot \sigma_1^*(L_{0}),
\end{align*}
thus $\sigma_1^*(L_{0})^2 = -2f_1 \cdot \sigma_1^*(L_{0})$.
Clearly $\sigma_1^*(L_{0}) \equiv L_1 + a f_1$ for some $a$, hence
\begin{align*}
-2 = \sigma_1^*(L_0)^2 = (L_1 + af_1)^2 = 2a.
\end{align*}
Therefore $\sigma_1^*(L_0) \equiv L_1 - f_1$.

We use $(i)$ and Lemma \ref{LemExcInt} to prove $(ii)$:
\begin{align*}
L_1 \cdot E^{(1)} = \sigma_1^*(L_0) \cdot E^{(1)} + f_1 \cdot E^{(1)} = 
f_1 \cdot E^{(1)} = -1.
\end{align*}

Thus $(iii)$ holds:
\begin{align*}
L_1 \cdot K_{X^{(1)}} = L_0 \cdot K_{X^{(0)}} + f_1 \cdot E^{(1)} = -1.
\end{align*}

By Lemma \ref{LemExcInt} we compute
\begin{align*}
\deg N_{L_1/X^{(1)}} = -2 - K_{X^{(1)}} \cdot L_1 = -1.
\end{align*}
On the other hand $N_{ E^{(1)} / X^{(1)} }\vert_{L_1} \cong \mathcal{O}_{L_1}(-1)$ by $(ii)$.
Using the same argument as in Lemma \ref{LemStaircase1} we conclude that $E^{(2)} \cong  \mathds{F}_1$.
\end{proof}

\begin{Lemma} \label{LemStaircase3}
\begin{enumerate}[(i)]
Suppose $E^{(i)} \cong \mathds{F}_1$ and $K_{X^{(i-1)}} \cdot L_{i-1} = -1$, then
	\item $\sigma_i^*(L_{i-1}) \equiv L_{i}$,
	\item $E^{(i)} \cdot L_{i} = 0$,
	\item $K_{X^{(i)}} \cdot L_{i} = -1$, and
	\item $E^{(i+1)} \cong \mathds{F}_1$.
\end{enumerate}
\end{Lemma}
\begin{proof}
Since $\deg N_{L_{i-1}/X^{(i-1)}}=-1$, Lemma \ref{LemExcInt} implies
\begin{align*}
0 = E^{(i)} \cdot \sigma_i^*(L_{i-1}) = 
E^{(i)} \vert_{E^{(i)}} \cdot \sigma_i^*(L_{i-1}) = 
\big( -\sigma_i^*(L_{i-1}) - f_i \big) \cdot \sigma_i^*(L_{i-1}).
\end{align*}
Therefore $\sigma_i^*(L_{i-1})^2=-f\cdot\sigma_i^*(L_{i-1})=-1$, and hence $\sigma_i^* (L_{i-1})$ is equivalent to the exceptional section $L_i$. 
Thus assertions $(i)$ and $(ii)$ hold.

By $(i)$ we have
\begin{align*}
K_{X^{(i)}}\cdot L_{i} = K_{X^{(i)}}\cdot \sigma_{i}^*(L_{i-1})=K_{X^{(i-1)}}\cdot L_{i-1}+E^{(i)}\cdot \sigma_i^*(L_{i-1})=-1.
\end{align*}

Lemma \ref{LemExcInt} implies 
$$
\deg N_{L_i/X^{(i)}} = -2 - K_{X^{(i)}} \cdot L_i = -1,
$$ 
while $N_{L_i/E^{(i)}} \cong \mathcal{O}_{L_i}(-1)$ since $L_i^2 = -1$. 
Hence computing $N_{L_{i}/X^{(i)}}$ as in Lemma \ref{LemStaircase1} we conclude that $E^{(i+1)} \cong \mathds{F}_1$.
\end{proof}

\begin{proof}[Proof of Theorem \ref{ThStaircase}:]
Lemma \ref{LemStaircase1} implies $(i)$.
Lemma \ref{LemStaircase2} implies $(iii)$.
Lemma \ref{LemStaircase2} and Lemma \ref{LemStaircase3} imply $(ii)$ and $(iv)$ by induction.

We prove $(v)$ by induction.
Obviously $L_1 \subset E^{(1)}$.
The equality $L_1\cdot E^{(1)} = -1$ holds by Lemma \ref{LemStaircase2}.
We show that if $L_{i-1}\cdot E^{(1,i-1)}=-1$, then $L_i\subset E^{(1,i)}$ and $L_i\cdot E^{(1,i)}=-1$. 
Indeed by Lemma \ref{LemExcInt} we have
\begin{align*}
E^{(1,i)}\cdot E^{(i)} \equiv \sigma_i^*(E^{(1,i-1)}) \cdot E^{(i)} - (E^{(i)})^2 \equiv
\big( E^{(1,i-1)} \cdot L_{i-1} \big) f_i + L_i + f_i \equiv L_i.
\end{align*}
The curve $L_i$ is the only effective curve in its numerical equivalence class that lies on $E^{(i)}$ since $E^{(i)} \cong \MF_1$ by assertion $(ii)$.
Thus we see that $E^{(i)} \cap E^{(1,i)} = L_i$, in particular $L_i\subset E^{(1,i)}$. 
Also 
\begin{align*}
E^{(1,i)} \cdot L_i = \big( E^{(1,i)}\vert_{E^{(i)}} \cdot L_i \big)_{E^{(i)}} =
L_i^2 = -1.
\end{align*}
Thus $(v)$ holds.

By induction Lemma \ref{LemStaircase2} and Lemma \ref{LemStaircase3} imply that $E^{(i)} \cdot L_{i} = 0$ for all $i \gem 2$. Thus we compute
\begin{align*}
E^{(i-1,i)} \cdot E^{(i)} \equiv \sigma_i^*(E^{(i-1)}) \cdot E^{(i)} - (E^{(i)})^2 \equiv \big( E^{(i-1)} \cdot L_{i-1} \big) f_i + L_i + f_i \equiv L_i + f_i.
\end{align*}
Hence by computing the intersection
\begin{align*}
E^{(i-1,i)} \cdot L_i = E^{(i-1,i)}\vert_{E^{(i)}} \cdot L_i 
= \big( (L_i + f_i)  \cdot L_i \big)_{E^{(i)}}=0.
\end{align*}
We conclude that either $E^{(i-1,i)}$ is disjoint from $L_i$ or $L_i \subset E^{(i-1,i)}$.
The latter does not occur since $E^{(1,i-1)}$ intersects $E^{(i-1)}$ transversally and $L_i \subset E^{(1,i)}$.
Therefore we have shown that $(vi)$ holds.

By Lemma \ref{LemExcInt} we have
\begin{align*}
F^{(1)}\vert_{E^{(1)}} \equiv 
- \big( E^{(1)} \big)^2 + \big( F^{(0)} \cdot L_0 \big) f_1 
\equiv \sigma_1^*(L_0) + 2 f_1 - f_1 \equiv L_1.
\end{align*}
Using this equivalence we compute $F^{(1)} \cdot L_1 = \big( L_1 \cdot L_1 \big)_{E^{(1)}} = 0$. 
Thus $F^{(1)}$ is either disjoint from $L_1$ or it contains $L_1$.
Since $\mult_{L_0} F^{(0)} =1$ we have $\nu_{E^{(1)}} (F^{(0)}) = 1$ that is to say $(vii)$ holds.

If $F^{(1)}$ and $L_1$ are disjoint, then $F^{(i)}$ is disjoint from $L_i$. 
Suppose $L_1 \subset F^{(1)}$, then $F^{(1)}$ and $E^{(1)}$ are smooth surfaces intersecting along a smooth curve $L_1$ transversally at every point.
Thus $F^{(2)}$ and $E^{(1,2)}$ are disjoint, in particular $L_2 \subset E^{(1,2)}$ and $F^{(2)}$ are disjoint.
It follows that $L_i$ is disjoint from $F^{(i)}$ for all $i\gem 2$.

To finish the proof of $(viii)$ we compute
$$
\big( \sigma_{1} \circ \sigma_{2} \circ \dots \circ \sigma_i \big)^* (F^{(0)}) = F^{(i)} + E^{(1,i)} + (1 + \delta) E^{(2,i)} + \dots + (i - 2 + \delta) E^{(i-1,i)} + (i - 1 + \delta) E^{(i)}.
$$
\end{proof}

\subsection*{The staircase over a del Pezzo fibration}
We would like to apply Theorem \ref{ThStaircase} to $\pi_I \colon X_I \to \MP^1$.
As before, we drop the index $I$ to simplify the notations.
Suppose $\pi \colon X \to \MP^1$ is a del Pezzo fibration of degree $2$ such that $X$ is a quasi-smooth complete intersection in a $\MP(1,1,1,2,2)$-bundle over $\MP^1$. 
Suppose $Q\in X$ is a $\frac{1}{2}(1,1,1)$-point and $F$ is the fibre containing $Q$. 
Let $\sigma_Q: X^{(0)} \to X$ be the Kawamata blow up of $X$ at $Q$ and let 
$E_Q$ be the exceptional divisor of $\sigma_Q$.
The fibre $F$, containing $Q$, can be embedded into $\MP(1,1,1,2)$ and the equation of the image is of the form 
$$
w q(x,y,z) + r(x,y,z)= 0.
$$

Let $L\subset F$ be a ``half-line'', that is a curve $L$ such that 
$L \cdot (- K_F) = \frac{1}{2}$. 
Let $L_0$ and $F^{(0)}$ be the proper transforms of $L$ and $F$ respectively on $X^{(0)}$.
We may construct the staircase associated to $L_0$. 
We also say that the staircase is associated to the half-line $L$. 

If $X$ satisfies the generality conditions, then $F^{(0)}$ and $L^{(0)}$ are smooth along $L_0$.
Thus to show that the triple $X^{(0)}$, $F^{(0)}$, $L_0$ satisfies the assumptions of Theorem \ref{ThStaircase} we only need to consider the intersections.

\begin{Lemma} \label{LemX0}
The following equalities hold
\begin{enumerate}[(i)]
	\item $L_0\cdot E_Q = 1$,
	\item $L_0\cdot F^{(0)} = -1$,
	\item $L_0 \cdot K_{X^{(0)}} = 0$.
\end{enumerate}
\end{Lemma}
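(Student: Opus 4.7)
The plan is to work analytically near $Q$, realise the Kawamata blowup $\sigma_Q$ as the quotient of an ordinary blowup by the local $\MZ/2$-stabiliser, and reduce (i)--(iii) to projection-formula computations on the smooth double cover. Concretely, identify $(X,Q)$ analytically with $(\MA^3/\MZ_2,0)$, where $\MZ_2$ acts by $-1$ on coordinates $(\xi,\eta,\zeta)$; then $F$ lifts to the $\MZ_2$-invariant hypersurface $G := \{q(\xi,\eta,\zeta)+r(\xi,\eta,\zeta)=0\} \subset \MA^3$ (invariance holds because $q,r$ have even degrees). Let $\sigma'\colon \tilde{\MA}^3\to\MA^3$ be the standard blowup of the origin with exceptional $\tilde E\cong \MP^2$. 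The $\MZ_2$-action lifts, fixes $\tilde E$ pointwise, and acts by $-1$ on the normal direction; the induced quotient $p'\colon \tilde{\MA}^3\to X^{(0)}$ realises $\sigma_Q$, identifies $\tilde E\cong E_Q$, and, via the slice description $s:=t^2$ with $t$ the normal coordinate upstairs, satisfies $(p')^{*}(E_Q)=2\tilde E$ (the standard ``branch equals twice ramification'' for a double cover).

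From Section~\ref{Involution}, the half-line $L$ lifts to a line $\gamma\subset\MA^3$ through the origin in a direction $(\xi_0:\eta_0:\zeta_0)$ with $q(\xi_0,\eta_0,\zeta_0)=r(\xi_0,\eta_0,\zeta_0)=0$. Its strict transform $\tilde\gamma\subset\tilde{\MA}^3$ meets $\tilde E$ transversally at $(\xi_0:\eta_0:\zeta_0)$, so $\tilde\gamma\cdot\tilde E=1$. Since $\MZ_2$ acts on $\gamma$ as $-1$ with only the origin fixed, the map $p'|_{\tilde\gamma}\colon\tilde\gamma\to L_0$ has degree $2$ and hence $p'_{*}(\tilde\gamma)=2L_0$. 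The projection formula yields
\begin{align*}
2(L_0\cdot E_Q) = p'_{*}(\tilde\gamma)\cdot E_Q = \tilde\gamma\cdot (p')^{*}(E_Q) = \tilde\gamma\cdot 2\tilde E = 2,
\end{align*}
which is (i). For (ii), pin down $m=\nu_{E_Q}(F)$ in $\sigma_Q^{*}F=F^{(0)}+mE_Q$ as follows: $G$ has multiplicity $2$ at the origin (its lowest-order term is the quadric $q$, nonzero by the generality hypothesis that $\widetilde F$ is smooth), so $(\sigma')^{*}G=\tilde G+2\tilde E$; pulling $F$ back around the commutative square relating $\sigma',\sigma_Q,p,p'$ and using $(p')^{*}F^{(0)}=\tilde G$ together with $(p')^{*}E_Q=2\tilde E$ forces $m=1$. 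Because $F$ is a fibre, $F|_{F}\equiv 0$ and hence $L\cdot F=0$; therefore $L_0\cdot F^{(0)}=L_0\cdot\sigma_Q^{*}F - L_0\cdot E_Q=0-1=-1$.

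For (iii), the Kawamata blowup of a $\frac{1}{2}(1,1,1)$-point has discrepancy $\tfrac{1}{2}$, so $K_{X^{(0)}}=\sigma_Q^{*}K_X+\tfrac{1}{2}E_Q$. The $\MQ$-adjunction $K_X|_{F}\sim_{\MQ} K_F$ (using $F|_{F}\equiv 0$) combined with the half-line condition $L\cdot(-K_F)=\tfrac{1}{2}$ gives $L\cdot K_X=-\tfrac{1}{2}$; combining with (i) yields $L_0\cdot K_{X^{(0)}}=L\cdot K_X+\tfrac{1}{2}(L_0\cdot E_Q)=-\tfrac{1}{2}+\tfrac{1}{2}=0$. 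The main technical hurdle is the multiplicity bookkeeping on the ramified double cover $p'$: the identities $(p')^{*}E_Q=2\tilde E$ and $p'_{*}\tilde\gamma=2L_0$ are exactly where the $\MZ_2$-orbifold structure intervenes, and once they are in place the remainder is a standard application of the projection formula and adjunction. A direct alternative would be to perform these intersection numbers inside the toric Cox description of $\widetilde U$ given in Section~\ref{Involution}, avoiding the double cover at the cost of first identifying $L_0$ as an intersection of toric strata in $\widetilde V$.
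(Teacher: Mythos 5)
Your proof is correct, and part (i) takes a genuinely different route than the paper. The paper identifies $L$ as $H_1\cap H_2$ in $\MP(1,1,1,2)$, blows up $\MP$ at the singular point, and computes $L_0\cdot E_Q$ on the surface $H_{2,Q}\cong\mathds{F}_2$ as (fibre)$\cdot$(exceptional section), which requires knowing the blown-up geometry of the ambient weighted projective space. You instead realize $\sigma_Q$ as the $\MZ_2$-quotient of the ordinary blowup of $\MA^3$ and reduce everything to the projection formula via $(p')^*E_Q=2\tilde E$ and $p'_*\tilde\gamma=2L_0$; this is more local, avoids the $\mathds{F}_2$ computation entirely, and would transfer verbatim to other orbifold charts. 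For (ii) the paper also works in the chart $w\neq 0$, simply observing that the local equation $q+r=0$ has multiplicity $2$, so $\nu_{E_Q}(F)=\tfrac{2}{2}=1$ and $F^{(0)}=\sigma_Q^*F-E_Q$; your pullback-around-the-square derivation of $m=1$ is a somewhat more explicit version of the same bookkeeping. Part (iii) coincides with the paper's argument: $K_{X^{(0)}}=\sigma_Q^*K_X+\tfrac12 E_Q$, $L\cdot K_X=-\tfrac12$ by adjunction and the half-line condition, then combine with (i). One small remark for precision: when you write $(p')^*F^{(0)}=\tilde G$ you are implicitly using that $p'$ is \'etale at a generic point of $\tilde G$ (ramification is only along $\tilde E$), which is the reason the multiplicity is $1$ and not $2$; it is worth saying this explicitly since $G\to F$ itself is $2$-to-$1$.
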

\begin{proof}
Let $\MP = \MP(1,1,1,2)$, and let $H$ be the generator of $\Cl(\MP)$. 
Because $F \subset \MP$ we can express $L$ in $\MP$ as the intersection $H_1\cdot H_2$, for some $H_i\in\big| H \big|$. 
Let $\sigma_\MP: \MP_Q \to \MP$ be the blow up at the point $Q$ and let $E_{\MP}$ be its exceptional divisor. 
Clearly $\sigma_\MP:(\sigma_\MP)^{-1}_*(F)\to F$ is the blow up at $Q$ thus we may identify $(\sigma_\MP)^{-1}_*(F)$ with $F^{(0)}$ and $(\sigma_\MP)^{-1}_*(L)$ with $L_0$. 
Let $H_{i,Q}$ be the proper transform of $H_i$ on $\MP_Q$, then 
$L_0 = H_{1,Q} \cdot H_{2,Q}$.
Denote the exceptional divisor of $\sigma_\MP$ as $E_\MP$, then
\begin{align*}
L_0\cdot E_Q = L_0\cdot E_Q \big\vert_{F^{(0)}} = L_0 \cdot E_\MP \big\vert_{F^{(0)}} 
= L_0 \cdot E_\MP = H_{1,Q} \cdot H_{2,Q} \cdot E_\MP =
H_{1,Q} \big\vert_{H_{2,Q}} \cdot E_\MP \big\vert_{H_{2,Q}}.
\end{align*}
The surface $H_2$ is isomorphic to $\MP(1,1,2)$ and $\sigma_\MP\vert_{H_{2,Q}}$ is the blow up at the singular point. 
It follows that $H_{2,Q} \cong \mathds{F}_2$, $E_\MP \big\vert_{H_{2,Q}}$ is the exceptional section, and $H_{1,Q} \big\vert_{H_{2,Q}}$ is a fibre. 
Thus we have $(i)$
\begin{align*}
L_0\cdot E_Q
= H_{1,Q} \big\vert_{H_{2,Q}} \cdot E_\MP \big\vert_{H_{2,Q}} = 1.
\end{align*}

To prove $(iii)$ it is enough to show that $F^{(0)}=F-\frac{2}{2}E_Q=F-E_Q$.
Indeed we find the intersection
\begin{align*}
L_0 \cdot F^{(0)} = L_0 \cdot (F - E_Q) = -L_0\cdot E_Q = -1.
\end{align*}
The equality $(iii)$ follows from $(i)$
\begin{align*}
K_{X^{(0)}} \cdot L_0 = (\sigma^*{K_X} + \frac{1}{2} E_Q) \cdot L_0 
= K_X \cdot L + \frac{1}{2} E_Q \cdot L_0 = 0.
\end{align*}

We now prove the claim.
Recall that $X$ is a complete intersection in $\MP(1,1,1,2,2)$ given by the equations
\begin{align*}
g(u,v) s = f(u,v) w + q(x,y,z;u,v),\quad \text{and} \quad
s w + r(x,y,z;u,v) = 0.
\end{align*}
The point $Q$ lies on $(x=y=z=0)$, up to a change of coordinates on the base, we may assume that it is in the fibre $u=0$.
Thus up to exchanging variables $w$ and $s$ we may assume that $Q$ is given by $x=y=z=w=u$.
Consider a neighbourhood $U_V$ of $Q$ given by $s=v=1$, then the equations of $U = X \cap U_V$ are
\begin{align*}
&u(1+u(\dots)) = w (1+u(\dots)) + q(x,y,z;u,1),\quad \text{and} \quad
&w + r(x,y,z;u,1) = 0.
\end{align*}
We may now eliminate the second equation to acquire the embedding $U \subset T_U \cong \MC^3_{x,y,z}/\mu_2 \times \MC_u$ given by the equation
$$
u(1+u(\dots)) + r(x,y,z;u,1) (1+u(\dots)) = q(x,y,z;u,1).
$$
We consider an analytic neigbourhood of $0$ in $\MC^3_{x,y,z}/\mu_2$ and the corresponding analytic neighbourhoods $U_{an}$ and $T_{U,an}$.
The equation of $U_{an}$ is $u=0$ and hence the equation of $F_{an} = F \cap U_{an}$ is
$$
r(x,y,z;0,1) =  q(x,y,z;0,1),
$$
where $r(x,y,z;0,1) \neq 0$ by the generality conditions.
It follows that $F^{(0)}=F-\frac{2}{2}E_Q=F-E_Q$ since $\deg r(x,y,z;0,1) = 2$.
\end{proof}

We now discuss the termination condition of the staircase.
Recall that there is a mobile linear system $\mathcal{M}\subset\big| -n K_X + l F\big|$ and discrete valuations $\nu$ such that the pair $(X,\frac{1}{n}\mathcal{M})$ is not canonical at a discrete valuation $\nu$, and $\nu$ is a supermaximal singularity. 
We have shown in Section \ref{SecRig} and Section \ref{Involution} that the centre of $\nu$ is a nonsingular point $P$. 
By Remark \ref{WhyStaircase} and Proposition \ref{PrCurveCenter} the point $P$ lies on a curve $L$ such that $L \cdot K_{X_I} = - \frac{1}{2}$.
In particular $P$ is in a fibre containing a $\frac{1}{2}(1,1,1)$-point $Q$.
We want to track the centre of $\nu$ as we go up the staircase, in particular we want to know how far up the staircase we must go.

\begin{Prop}[{\cite[Proposition~7.1]{Pukh123}}] \label{PropStopCond}
Let $X^{(0)}$ be a threefold and let $F^{(0)}$ be a surface in it. Suppose $L_0 \cong \MP^1$, $L_0 \subset F^{(0)}$. 
Suppose also $X^{(0)}$ and $F^{(0)}$ are smooth in the neighbourhood of $L_0$.
Let $\sigma_i: X^{(i)} \to X^{(i-1)}$ be the associated staircase. Let $\nu$ be a discrete valuation of $K(X^{(0)})$ and suppose that a centre of $\nu$ on $X^{(0)}$ is a point on $L_0$. 
Then there is a positive integer $M$ such that for every $i<M$ the centre of $\nu$ on $X^{(i)}$ is a point on $L_i$ and the centre of $\nu$ on $X^{(M)}$ is one of the following
\begin{enumerate}[A)]
	\item a fibre of a ruled surface $E^{(M)}$,
	\item a point not on $L_M$ and not on $E^{(M-1,M)}$ and $M \gem 2$, or
	\item a point on $E^{(M)}\cap E^{(M-1,M)}$ and $M \gem 3$.
\end{enumerate}
\end{Prop}

The idea of the proof is the following: the discrepancy of $E^{(i)}$ is increasing, therefore the centre of $\nu$ should be away from $E^{(i)}$ for $i \gg 0$.
If the centre of $\nu$ on $E^{(1)}$ is a point we can choose $L_1$ to be the line from horizontal ruling passing through that point, thus we never get cases B and C for $M = 1$.
The intersection of $E^{(1,2)}$ and $E^{(2)}$ is $L_2$, thus we never get the case $C$ for $M=2$.

\section{Multiplicities on the staircase} \label{multstaircase}
The plan is to associate a staircase to a half-line, to apply Corti's inequality upstairs, and to derive a contradiction. 
Thus we need to find bounds on multiplicities of the cycles upstairs.
We compute them in this section.

Let $X$ be a projective threefold, let $B$ be a smooth, irreducible subvariety of codimension $2$, and suppose $X$ is smooth along $B$.
Let $\sigma: \widetilde{X} \to X$ be the blow up along $B$.
Then by definition of the proper transform for any cycle $Z$ such that $B \not\subset \ON{Supp} Z$ we have
$\sigma^*(Z) = \sigma^{-1}_*(Z) + Z_E$,
where $Z_E$ is the cycle with the support on the exceptional divisor $E$ of $\sigma$.
The following well known results give us some information about $Z_E$.

\begin{Lemma} \label{LemCycle1}
Suppose $B$ is a smooth curve then $Z_E\equiv(Z\cdot B)_S f$, where $f$ is the class of a fibre of the ruled surface $E$ and $S$ is any normal surface containing $\ON{Supp} Z$ and $B$ which is smooth at every point of $\ON{Supp} Z\cap B$.
\end{Lemma}

\begin{Lemma}[{\cite[Lemma~2.14]{Kr16}}] \label{LemCycle2}
Let $F$ be the hyperplane given by the equation $z=0$ in $\MC^3$, let $B$ be a smooth irreducible curve in $F$, and let $C$ be an irreducible curve which does not lie in $F$. 
Let $f\in A^{2}(X)$ be the class of a fibre of the ruled surface $E$. 
Then $\sigma^* C \equiv \sigma^{-1}_* C+kf$, where $k\leqslant C\cdot F$.
\end{Lemma}

Suppose cycle $Z$ is the intersection $D_1 \cdot D_2$.
Then we can express $\sigma^*Z$ in terms of the intersection of proper transforms of $D_1$ and $D_2$ and a cycle supported on the exceptional divisor.

\begin{Lemma} \label{LemD2}
Let $D_1$ and $D_2$ be general members in a mobile linear system $\mathcal{M}$ on a smooth variety $X$ and suppose $Z=D_1\cdot D_2$. 
Let $\widetilde{D}_i$ be the proper transform of $D_i$ on $\widetilde{X}$. Then
\begin{align*}
\widetilde{D}_1\cdot \widetilde{D}_2\equiv \sigma^{*}(Z)+\Delta_E,
\end{align*}
where $\ON{Supp} \Delta_E\subset E$. 

Suppose also that $B$ is a smooth curve. Let $m=\nu_E(\mathcal{M})$ and let $f\in A^2(\widetilde{X})$ be the class of a fibre of the ruled surface $E$. Then 
\begin{align*}
\Delta_E\equiv m^2E^2-2m(D_1\cdot B)f.
\end{align*}
\end{Lemma}

We use the following notations for the proper transforms on the staircase.
Let $A$ be a cycle, a divisor, or a linear system on $X$. 
We denote its proper transform on $X^{(i)}$ as $A^{(i)}$. 
For divisors and cycles on $X^{(j)}$ we add upper index. For example, $E^{(1,3)}$ is the proper transform of $E^{(1)}$ on $X^{(3)}$. By $\sigma^*$ we mean the appropriate composition of $\sigma^*_i$. 
For example, $E^{(1,3)}=\sigma^*(E^{(1)})-E^{(2,3)} - 2 E^{(3)}$, here $\sigma^* =\sigma^*_3\circ\sigma_2^*$. 

Recall the notations of Section 5. Let $Z=D_1\cdot D_2$ for general members $D_1,D_2\in\mathcal{M}$. Let $Z_h$ be the horizontal part of $Z$ and let $Z_v$ be the part of $Z$ which lies in $F$.
Let $\gamma$ be the number such that the pair 
$(X,\frac{1}{n}\mathcal{M}-\gamma F)$
is not canonical at $\nu$ and $\deg Z_v < 4n^2\gamma$. 
Let $L$ be the curve passing through $P$ and satisfying $L \cdot K_X = - \frac{1}{2}$.
The cycle $Z_v$ can be decomposed as 
$$
Z_v=kL+\Delta,
$$
where $k\geqslant 0$ and $\Delta$ does not contain $L$.

\begin{Lemma} \label{LemCLInt}
The inequality $\Delta \cdot L < 4 \gamma n^2$ holds.
\end{Lemma}
\begin{proof}
Let $\mathcal{H}$ be the subsystem of $\big| -K_F \big|$ of divisors containing $L$.
We claim that $\dim \mathcal{H} = 1$.
Pick a point $P \in L$ such that $X$ is smooth at $P$.
Then it is easy to see that the linear system of divisors in $\big| -K_F \big|$ containing $P$ has dimension $1$.
On the other hand, any divisor in $\big| -K_F \big|$ containing $P$ also contains $L$.
Let $H$ be a general element of $\mathcal{H}$.
Let $L^\prime=H-L$. 
Since $\mathcal{H}$ does not have fixed points except points on $L$, the linear system $\big| L^\prime \big|$ is mobile, in particular $L^\prime$ is numerically effective. 
The valuation $\nu$ is a supermaximal singularity hence we have $\deg Z_v < 4\gamma n^2$ and therefore 
\begin{align*}
\Delta \cdot L \leqslant \Delta \cdot L+ \Delta \cdot L^\prime = \Delta \cdot H \leqslant Z_v \cdot H = \deg Z_v < 4\gamma n^2
\end{align*}
\end{proof}

Denote $\nu_Q = \nu_{E_Q}$.
Since the pair $(X,\frac{1}{n}\mathcal{M})$ is canonical at $Q$ we have $\nu_{Q}(D)\leqslant \frac{n}{2}$ for general $D\in\mathcal{M}$.

\begin{Lemma}
Suppose $D$ is a general member in $\mathcal{M}$, then
\begin{align*}
&D^{(0)}\cdot L_{0}=\frac{n}{2}-\nu_Q(D), \quad and\\
&D^{(i)}\cdot L_{i}=\frac{n}{2}-\nu_Q(D)+\lambda_1 \quad \text{for } i\geqslant 1,
\end{align*}
where $\lambda_1 = \mult_{L_0} \mathcal{M}^{(0)}$.
\end{Lemma}
\begin{proof}
Lemma \ref{LemX0} implies
\begin{align*}
D^{(0)}\cdot L_0&=\sigma_Q^*(D)\cdot L_0 - \nu_Q(D)E_Q\cdot L_0=D\cdot L - \nu_Q(D)=\\
&-nK_X\cdot L - \nu_Q(D)=\frac{n}{2}-\nu_Q(D).
\end{align*}
On the other hand by Lemma \ref{LemExcInt} and $(iii)$ of Theorem \ref{ThStaircase} 
\begin{align*}
D^{(1)}\cdot L_1 = (\sigma_1^* D^{(0)} - \lambda_1 E^{(1)}) (\sigma_1^* L_0 + f_1)
= D^{(0)} \cdot L_0 + \lambda_1.
\end{align*}
By Theorem \ref{ThStaircase} we have $\sigma_i^*L_{i}=L_{i+1}$ for $i\geqslant 1$.
Thus the equality $D^{(1)}\cdot L_{1}=D^{(i)}\cdot L_{i}$ holds for all $i\geqslant 1$. 

\end{proof}

Denote $Z_i=D^{(i)}_1\cdot D^{(i)}_2$, we have the decomposition $Z_0=Z_v^{(0)}+Z_h^{(0)}+Z_Q$, where $Z_Q \subset E_Q$.
We can disregard the part $Z_Q$ because it is away from $P$.

For every $Z_i$ we have the part of the cycle in $E^{(i)}$, let us denote it $\Gamma^{(i)}$. 
Recall that $E^{(1)} \cong \MP^1 \times \MP^1$ and $E^{(i)} \cong \mathds{F}_1$ for $i\geqslant 2$. 
The map $\sigma_i \big\vert_{E^{(i)}}$ is the corresponding $\MP^1$-fibration. 
We say that a curve $B$ on $E^{(i)}$ is vertical if $\sigma_i(B)$ is a point and horizontal otherwise. 
We can decompose the cycle $\Gamma^{(i)}$ into $L_i$ with multiplicity, the rest of the horizontal part, and the vertical part: 
$$
\Gamma^{(i)} = k_iL_i+C^{(i)}_h+C^{(i)}_v. 
$$
Since $E^{(i,i+h)}$ is disjoint from $L_{i+h}$ for any $h\geqslant 2$, $i\geqslant 2$, we have $\sigma^* C^{(i,i+1)}\equiv C^{(i,i+h)}$ for any $h\geqslant 2$, $i\geqslant 2$.
Similarly $\Delta^{(i)}\equiv \sigma^* \Delta^{(2)}$ for $i\geqslant 3$ since $F^{(i)}$ is disjoint from $L_i$ for $i \gem 2$. 
Thus we can decompose 
\begin{align*}
Z_0 &= Z_h^{(0)} + Z_v^{(0)} = Z_h^{(0)} + \Delta^{(0)} + k_0 L_0,\\
Z_1 &= Z_h^{(1)} + \Delta^{(1)} + C_h^{(1)} + C_v^{(1)} + k_1L_1,\\
Z_2 &= Z_h^{(2)} + \Delta^{(2)} + C_h^{(1,2)} + C_v^{(1,2)} + 
C_h^{(2)} + C_v^{(2)} + k_2L_2,\\
Z_i &= Z_h^{(i)} + \sigma^* \big( \Delta^{(2)} \big) + C_h^{(1,i)} + C_v^{(1,i)} 
+ \sigma^*C_h^{(2,3)} + \sigma^*C_v^{(2,3)} + \dots\\
& \dots + C_h^{(i-1,i)} + C_v^{(i-1,i)} + C_h^{(i)} + C_v^{(i)} + k_i L_i.
\end{align*}
Recall that additional upper indices mean the proper transforms. 

Denote the $\mult_{L_{i-1}}\mathcal{M}^{({i-1})}$ as $\lambda_i$. 
Recall that by $f_i$ we denote the class of the fibre of the ruled surface $E^{(i)}$.
Thus $C_v^{(i)} \equiv d_v^{(i)} f_i$ and $C_h^{(i)} \equiv d_h^{(i)} L_i + \beta_i f_i$ for some $d_v^{(i)}$, $d_h^{(i)}$, and $\beta_i$. 
Also $d_h^{(i)}\leqslant \beta_i$, because $C_h^{(i)}$ does not contain the exceptional section. 
Recall that $\delta = 1$ if $L_1\subset F^{(1)}$ and $\delta = 0$ otherwise. 
We now describe how the classes in components of $Z_{i}$ change as we climb up the staircase.

\begin{Lemma} \label{LemCycleRel}
We have the following relations for the proper transforms and the pullbacks of the cycles
\begin{align*}
&\Delta^{(1)} \equiv \sigma^*_1 \Delta^{(0)} - \big( \Delta^{(0)}\cdot L_0 \big)_{F^{(0)}} f_{1},\\
&\Delta^{(2)} \equiv \sigma^*_2 \Delta^{(1)} - \delta (\Delta^{(0)} \cdot L_0)_{F^{(0)}} f_{2},\\
&C_h^{(1,i+1)} \equiv \sigma^*_{i+1} C_h^{(1,i)} - \beta_1 f_{i+1} 
\quad \text{for} ~ i\geqslant 1,\\
&C_v^{(1,i+1)} \equiv \sigma^*_{i+1} C_h^{(1,i)} - d_v^{(1)} f_{i+1} 
\quad \text{for} ~ i\geqslant 1,\\
&C_h^{(i,i+1)} \equiv \sigma^*_{i+1} C_h^{(i)} - \big( \beta_i - d^{(i)}_h \big) f_{i+1} 
\quad \text{for} \quad i\geqslant 2,\\
&C_v^{(i,i+1)} \equiv  \sigma^*_{i+1} C_v^{(i)} - d^{(i)}_v f_{i+1}
\quad \text{for} \quad i\geqslant 1,\\
&Z_h^{(i+1)} \equiv \sigma^*_{i+1} Z_h^{(i)} - \alpha_{i+1} f_{i+1} \quad \text{for some} \quad \alpha_{i+1} \leqslant 2n^2.
\end{align*}
\end{Lemma}
\begin{proof}
First equivalence follows from Lemma \ref{LemCycle1} directly.
If $\delta = 0$, that is if $F^{(1)}$ and hence $\Delta^{(1)}$ is disjoint from $L_1$ then we have $\Delta^{(2)} \equiv \sigma^*_2 \Delta^{(1)}$.
If $\delta = 1$, that is if $L_1 \subset F^{(1)}$, then the equivalence follows from Lemma \ref{LemCycle1}.

Note that 
\begin{align*}
&(C^{(1,i)}_h \cdot L_i)_{E^{(1,i)}} = (C^{(1)}\cdot L_1)_{E^{(1)}} = \beta_1,\\
&(C^{(1,i)}_v \cdot L_i)_{E^{(1,i)}} = (C^{(1)}\cdot L_1)_{E^{(1)}} = d_v^{(1)},\\
&(C_h^{(i)} \cdot L_i)_{E^{(i)}} = \beta_i - d^{(i)}_h,\\
&(C_v^{(i)} \cdot L_i)_{E^{(i)}} = d^{(i)}_v.
\end{align*}
Thus all equivalences but the last follow from Lemma \ref{LemCycle1}. 
By Lemma \ref{LemCycle2} we have 
$$
\alpha_i \lem Z_h^{(i)} \cdot E^{(i)} \lem Z_h \cdot F
$$
Let $D_1,D_2$ be general divisors in $\mathcal{M}$ then
$$
Z_h \cdot F = D_1 \cdot D_2 \cdot F = 2 n^2
$$
since $D_i \in \big| -n K_X + l n F \big|$.
\end{proof}

\begin{Lemma} \label{LemDegRel}
We have the following relations for vertical degrees on $X^{(i)}$. 
For $i=1$
\begin{align*}
\beta_1 + d_v^{(1)} = \alpha_1 + \big( \Delta^{(0)} \cdot L_0 \big)_{F^{(0)}}
- \lambda_1 \big( n - 2\nu_Q(D) \big) - 2\lambda_1^2 - k_1 - d_h^{(1)},
\end{align*}
for $i=2$
\begin{align*}
\beta_2 + d_v^{(2)} = \alpha_2 + \delta \big( \Delta^{(0)} \cdot L_0 \big)_{F^{(0)}} 
- \lambda_1 \big( n - 2 \nu_Q(D) \big) - \lambda_2^2 - 2 \lambda_1 \lambda_2 + 
\beta_1 + d_v^{(1)},
\end{align*}
and for $i\geqslant3$
\begin{align*}
\beta_i + d_v^{(i)} = \alpha_{i-1} + \big( \beta_1 + d_v^{(1)} \big) 
+ d_v^{(i-1)} + \big( \beta_{i-1} - d_h^{(i-1)} \big) 
- \lambda_i \big( n - 2 \nu_Q(D) \big) - 2 \lambda_1 \lambda_i - \lambda_i^2,
\end{align*}
where $D$ is a general member in $\mathcal{M}$.
\end{Lemma}

\begin{proof}
By Lemma \ref{LemD2} we have the equivalence
\begin{align*}
Z_1 & \equiv \sigma^{*}(Z_0) + \lambda_1^2 \big( E^{(1)} \big)^2 
- 2 \lambda_1 \big( D^{(0)} \cdot L_{0} \big) f_1 \\
& \equiv \sigma^{*} \big( Z_0 \big) - \lambda_1^2 \sigma^{*}(L_0) 
- \Big( \lambda_1 \big( n - 2 \nu_Q(D) \big) + 2 \lambda_1^2 \Big) f_1.
\end{align*}
On the other hand from the decomposition of $Z_1$, Lemma \ref{LemCycleRel}, and Lemma \ref{LemStaircase2} $(i)$ we have
\begin{align*}
&Z_1 = Z_h^{(1)} + \Delta^{(1)} + C_h^{(1)} + C_v^{(1)} + k_1 L_1 \equiv\\
\equiv \sigma_1^* \big( &Z_h^{(0)} + \Delta^{(0)} + (k_1 + d_h^{(1)}) L_0 \big) + (\beta_1 + d_h^{(1)}) f_1 + C_v^{(1)} 
- \big( \alpha_1 + \big( \Delta^{(0)} \cdot L_0 \big)_{F^{(0)}} - k_1 \big)f_1.
\end{align*}
Combining these equivalences we conclude that in $A^2(X^{(1)})/\sigma_1^* A^2(X^{(0)})$ we have
\begin{align*}
(\beta_1 + d_h^{(1)} + d_v^{(1)}) f_1 \equiv C_h^{(1)} + C_v^{(1)} \equiv 
- \Big( \lambda_1 \big( n - 2 \nu_Q(D) \big) + 2 \lambda_1^2 \Big) f_1 
+ \big( \alpha_1 + \big( \Delta^{(0)} \cdot L_0 \big)_{F^{(0)}} - k_1 \big) f_1.
\end{align*}

Similarly by Lemma \ref{LemD2}
\begin{align*}
&Z_2 \equiv \sigma^{*} \big( Z_1 \big) + \lambda_2^2 \big( E^{(2)} \big)^2 
- 2 \lambda_2 \big( D^{(1)}_1 \cdot L_{1} \big) f_2 \equiv\\
&\equiv \sigma^{*} \big( Z_1 \big) - \lambda_2^2 \sigma^{*}(L_{1}) 
- \Big( \lambda_2 \big( n - 2 \nu_Q(D) \big) 
+ \lambda_2^2 + 2 \lambda_1 \lambda_2 \Big) f_2.
\end{align*}
From the decomposition of $Z_2$ and Lemma \ref{LemDegRel} we see that
\begin{align*}
Z_2 = Z_h^{(2)} 
+ \Delta^{(2)} + C_h^{(1,2)} + C_v^{(1,2)} + C_h^{(2)} & + C_v^{(2)}+ k_2 L_2 \equiv\\
 \equiv \sigma^* \big( Z_h^{(1)} + \Delta^{(1)} + C_h^{(1)} + C_v^{(1)} + k_2 L_{1} \big) &
+ C_h^{(2)} + C_v^{(2)}- \\
- \big( \alpha_{2} + \delta \Delta^{(0)} & \cdot L_0 + \beta_{1} + d^{(1)}_v \big) f_2.
\end{align*}
Combining these equalities we conclude that in $A^2(X^{(2)})/\sigma_1^* A^2(X^{(1)})$ we have
\begin{align*}
C_h^{(2)} + C_v^{(2)} \equiv (d^{(2)}_v + \beta_2)f_2 \equiv \big( \alpha_{2}& + \delta \big( \Delta^{(0)} \cdot L_0 \big)_{F^{(0)}} + \beta_1 + d_v^{(1)} \big) f_2\\
&- \Big( \lambda_2 \big( n - 2 \nu_Q(D) \big) + 2 \lambda_1 \lambda_2 + \lambda_2^2 \Big) f_2.
\end{align*}
Hence we find $d^{(2)}_v+\beta_2$.

We treat the general case the same. Once again by Lemma \ref{LemD2} we have
\begin{align*}
&Z_i \equiv \sigma^{*} \big( Z^{(i-1)} \big) + \lambda_i^2 \big( E^{(i)} \big)^2 
- 2 \lambda_i \big( D^{(i-1)}_1 \cdot L_{i-1} \big) f \equiv \\
&\equiv \sigma^{*} \big( Z^{(i-1)} \big) - \lambda_i^2 \sigma^{*} (L_{i-1}) 
- \Big( \lambda_i \big( n - 2 \nu_Q(D) \big) 
+ \lambda_i^2 + 2 \lambda_1 \lambda_i \Big) f_i.
\end{align*}
Once again from the decomposition of $Z_i$ and Lemma \ref{LemCycleRel} we see that
\begin{align*}
&Z_i \equiv Z_h^{(i)} + C_h^{(1,i)} + C_v^{(1,i)} 
+ \sigma^* \big( \Delta^{(2)} + C_h^{(2,3)} + C_v^{(2,3)} + \dots 
+ C_h^{(i-2,i-1)} + C_v^{(i-2,i-1)} \big) + \\
& + C_h^{(i-1,i)} + C_v^{(i-1,i)} + C_h^{(i)} + C_v^{(i)} + k_i L_i
= \sigma^* \big( Z_h^{(i-1)} + \Delta^{(1+\delta)} + C_h^{(1,i-1)} + C_v^{(1,i-1)} + \\
&+ C_h^{(2,3)} + C_v^{(2,3)} + \dots + C_h^{(i-2,i-1)} + C_v^{(i-2,i-1)} 
+ C_h^{(i-1)} + C_v^{(i-1)} + k_i L_{i-1} \big) + C_h^{(i)} + C_v^{(i)} -\\
&- \big( \alpha_{i} + \beta_1 + d_v^{(1)} + 
(\beta_{i-1} - d^{(i-1)}_h) + d^{(i-1)}_v \big) f_i.
\end{align*}
Combining these equalities we conclude that in $A^2(X^{(i)})/\sigma_1^* A^2(X^{(i-1)})$ we have
\begin{align*}
C^{(i)}_h + C^{(i)}_v \equiv 
\big( \alpha_{i-1} + \big( \Delta^{(0)} \cdot L_0 \big)_{F^{(0)}} + (\beta_{i-1} - d^{(i-1)}_h) + &d^{(i-1)}_v  + \beta_1 + d_v^{(1)} \big) f_i-\\
 &- \Big( \lambda_i \big(n - 2 \nu_Q(D) \big) 
+ 2 \lambda_1 \lambda_i + 2 \lambda_i^2 \Big)f_i.
\end{align*}
Hence we find $d^{(i)}_v+\beta_i$.
\end{proof}

\begin{Cor} \label{CorDegBound}
The vertical degrees are bounded as follows
\begin{align*}
&\beta_1 + d_v^{(1)} < 2 n^2 - 2 \lambda_1^2 + 4 n^2 \gamma,\\
&\beta_M + d_v^{(M)} < (M-1) \big( 2 n^2 - 2 \lambda_1^2 + 4 n^2 \gamma \big) 
+ \sum_{i=2}^{M} \big( 2 n^2 - \lambda_i^2 - 2 \lambda_1 \lambda_i \big) 
+ 4 \delta n^2 \gamma~~  \text{for}~ M \gem 2.
\end{align*}
\end{Cor}
\begin{proof}
Since the pair $(X,\frac{1}{n}\mathcal{M})$ is canonical at $Q$ the inequality 
$n \gem 2 \nu_Q(D)$ holds. 
For $i=1$ by Lemma \ref{LemDegRel} we have
\begin{align*}
\beta_1 + d_v^{(1)} = 
\alpha_1 + \big( \Delta^{(0)} \cdot L_0 \big)_{F^{(0)}} - \lambda_1 ( n - 2 \nu_Q(D) ) - 2 \lambda_1^2 - k_1 - d_h^{(1)}.
\end{align*}
Combining this with the inequalities $\alpha_1 \leqslant 2n^2$,  $k_1, d_h^{(1)} \gem 0$, and $\big( \Delta^{(0)}\cdot L_0 \big)_{F^{(0)}} \leqslant \big(\Delta \cdot L\big)_F < 4 \gamma n^2$ we get
\begin{align*}
\beta_1 + d_v^{(1)} < 2 n^2 + 4 \gamma n^2 - 2 \lambda_1^2.
\end{align*}

We prove the second bound by induction. Suppose $M=2$. Then using the same bounds we get
\begin{align*}
&\beta_2 + d_v^{(2)} = \alpha_{2} + \delta \big( \Delta^{(0)} \cdot L_0 \big)_{F^{(0)}} + d_v^{(1)} + \beta_{1}
- \lambda_i ( n - 2 \nu_Q(D) ) - \lambda_1^2 - 2 \lambda_1 \lambda_2 < \\
&< (2 n^2 + 4 \gamma n^2 - \lambda_1^2) + 
(2 n^2 - \lambda_2^2 - 2 \lambda_1\lambda_2) + 4 \delta \gamma n^2.
\end{align*}

Now suppose the inequality holds for $M-1$. 
Then similar to the previous case we have
\begin{align*}
&\beta_i + d_v^{(i)} = \alpha_{i} + (\beta_1 + d_v^{(1)}) + d_v^{(i-1)} 
+ (\beta_{i-1} - d_h^{(i-1)}) 
- \lambda_i ( n - 2 \nu_Q(D) ) - 2 \lambda_1 \lambda_i - \lambda_i^2 <\\
&< ( 2 n^2 - 2 \lambda_1 \lambda_i - \lambda_i^2 ) 
+ (\beta_1 + d_v^{(1)}) + d_v^{(i-1)} + \beta_{i-1}.
\end{align*}
Combining this with the bound on $\beta_1 + d_v^{(1)}$ and the assumption of induction we get the statement of the corollary.
\end{proof}

\begin{Cor} \label{CorMultBound}
The following bounds on multiplicities hold.
\begin{enumerate}[(i)]
\item Let $B$ be a fibre of a ruled surface $E^{(i)}$ then 
\begin{align*}
 \mult_B Z_1 =& \mult_B C^{(1)}_v 
\lem 2 n^2 - 2 \lambda_1^2 + 4 n^2 \gamma,\\
 \mult_B Z_i =& \mult_B C^{(i)}_v \lem
\sum_{i=2}^{M} \big( 4 n^2 - 2 \lambda_1^2 - \lambda_i^2 - 2 \lambda_1 \lambda_i \big)
+ 4 (M - 1 + \delta) n^2 \gamma, \quad \text{for} ~ i\geqslant 2.
\end{align*}
\item Let $B$ be a point on $E^{(i)}$ and suppose $B$ is not on the exceptional section then
\begin{align*}
&\mult_B (C^{(1)}_v + C^{(1)}_h) 
\lem \big( 2 n^2 - 2 \lambda_1^2 + 4n^2 \gamma \big)\\
&\mult_B (C^{(i)}_v + C^{(i)}_h) \lem
\sum_{i=2}^{M} \big( 4 n^2 - 2 \lambda_1^2 - \lambda_i^2 - 2 \lambda_1 \lambda_i \big)
+ 4 (M - 1 + \delta) n^2 \gamma, \quad \text{for} ~ i\geqslant 2.
\end{align*}

\end{enumerate}
\end{Cor}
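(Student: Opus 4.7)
The plan is to identify precisely which components of $Z_i$ (in part (i)) or of the cycles restricted to $E^{(i)}$ (in part (ii)) can contribute nontrivially to the multiplicity at $B$, and then to convert the resulting numerical bound into the arithmetic bound via Corollary~\ref{CorDegBound}.

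For part (i), $B$ is a fibre of the ruling $\sigma_i|_{E^{(i)}}$. I would go through the decomposition of $Z_i$ term by term and verify that only $C_v^{(i)}$ can carry $B$ as an irreducible component. The horizontal piece $Z_h^{(i)}$ and the section $k_i L_i$ visibly contain no fibre of $E^{(i)}$. The terms supported on $E^{(j,i)}$ with $2 \lem j < i$ or on $F^{(i)}$ are either disjoint from $E^{(i)}$ or meet it only along $L_i$, by Theorem~\ref{ThLadder}(vi) and (viii); and $C_h^{(1,i)}$ is horizontal inside $E^{(1,i)}$, hence contains no fibre of $E^{(i)}$. Therefore $\mult_B Z_i = \mult_B C_v^{(i)}$, and since $C_v^{(i)} \equiv d_v^{(i)} f_i$ consists of fibres, its coefficient along $B$ is at most $d_v^{(i)} \lem \beta_i + d_v^{(i)}$; Corollary~\ref{CorDegBound} then delivers the desired bound.

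For part (ii), let $B \in E^{(i)}$ be a point not on $L_i$. I would bound the two contributions on $E^{(i)} \cong \mathds{F}_1$ (or $\MP^1 \times \MP^1$ when $i=1$) separately. The vertical cycle $C_v^{(i)} \equiv d_v^{(i)} f_i$ is supported on fibres, and only the unique fibre through $B$ contributes, with coefficient at most $d_v^{(i)}$. For the horizontal part $C_h^{(i)} \equiv d_h^{(i)} L_i + \beta_i f_i$, the fact that it contains no component equal to $L_i$ forces $\beta_i \gem d_h^{(i)}$; intersecting $C_h^{(i)}$ with the fibre through $B$, which is not among its components, gives $\mult_B C_h^{(i)} \lem d_h^{(i)} \lem \beta_i$. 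Summing these two estimates and substituting the bound from Corollary~\ref{CorDegBound} yields the stated inequality.

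The main obstacle is bookkeeping rather than conceptual: one must carefully trace how every component in the elaborate decomposition of $Z_i$ sits relative to $E^{(i)}$, and the disjointness and normal-bundle data in Theorem~\ref{ThLadder} do the heavy lifting in ruling out all contributions except $C_v^{(i)}$ and $C_h^{(i)}$.
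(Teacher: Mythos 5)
Your argument follows essentially the same route as the paper's proof: bound $\mult_B C_v^{(i)}$ by $d_v^{(i)}$, bound $\mult_B C_h^{(i)}$ by $d_h^{(i)}\lem\beta_i$ using that $C_h^{(i)}$ avoids the exceptional section, and feed $d_v^{(i)}+\beta_i$ into Corollary~\ref{CorDegBound}; the paper does exactly this, only more tersely (it treats the identity $\mult_B Z_i=\mult_B C_v^{(i)}$ in part (i) as evident rather than unwinding the decomposition). One small slip in your verification of (i): you invoke Theorem~\ref{ThLadder}(vi) to say the terms from $E^{(j,i)}$ ``meet $E^{(i)}$ only along $L_i$,'' but (vi) asserts the opposite for $j=i-1$, namely that $E^{(i-1,i)}\cap E^{(i)}$ is \emph{disjoint} from $L_i$; its class on $E^{(i)}\cong\mathds{F}_1$ is $L_i+f_i$, a section. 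The correct way to rule it out is that this intersection is a section and hence cannot equal a fibre $B$, not that it lies on $L_i$. This does not affect the conclusion, but the stated justification would not survive scrutiny as written.
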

\begin{proof}
Clearly $\mult_B C^{(i)}_v$ is bounded by a vertical degree $d^{(i)}_v$ whether $B$ is a point or a curve. 
Thus the inequality holds if $B$ is a curve.

Similarly, if $B$ is a point, then $\mult_B C^{(i)}_h\leqslant d^{(i)}_h$, hence $\mult_B (C^{(i)}_v + C^{(i)}_h) \leqslant d^{(i)}_v+d^{(i)}_h$. 
Since $C^{(i)}_h$ does not contain the exceptional section we have $d^{(i)}_h\leqslant \beta_i$. 
Therefore by Corollary \ref{CorDegBound} the inequalities hold.
\end{proof}

\section{Method of supermaximal singularity upstairs} \label{SMupstairs}

In the previous section we found an upper bound on the multiplicity of components of $Z_M$ at the centre of $\nu$ on $X^{(M)}$. In this section we show that it contradicts Corti's inequality.

\begin{Lemma} \label{LemLogUp}
The pair 
\begin{align*}
\bigg( X^{(M)},
\frac{1}{n} \mathcal{M}^{(M)} 
- \Big( \frac{1}{n}(n - \lambda_1 ) &+ \gamma \Big) E^{(1,M)} -\\
-&\sum_{j=2}^M \Big( \frac{1}{n} \sum_{i=2}^{j} ( 2 n - \lambda_1 -  \lambda_i) 
+ (j - 1 + \delta) \gamma \Big) E^{(j,M)} - \gamma F^{(M)} \bigg)
\end{align*}
is not canonical at $\nu$ on $X^{(M)}$.
\end{Lemma}
\begin{proof}
By Theorem \ref{ThStaircase} we have $L_i\subset E^{(1,i)}$ for all $i$ and 
$L_i \not\subset E^{(j,i)}$ for $i > j > 1$, therefore the following equivalence holds
\begin{align*}
K_{X^{(M)}} - E^{(1,M)} - \sum_{i=2}^{M} 2 (i - 1) E^{(i,M)} - \frac{1}{2} E_Q 
\sim \sigma^*(K_X).
\end{align*}
We disregard $E_Q$ in this equivalence and in the next ones since $E_Q$ is away from $P$.
For a general member $D\in\mathcal{M}$ we have 
\begin{align*}
&D^{(1)} + \lambda_1 E^{(1)} \sim \sigma^*(D) \quad \text{and} \\
&D^{(M)} + \sum_{j=1}^M \Big( \sum_{i=2}^j (\lambda_i + \lambda_1) E^{(j,M)} \Big) 
\sim \sigma^*(D) \quad \text{for} ~ j\gem 2.
\end{align*}
By Theorem \ref{ThStaircase} $(vii)$ and $(viii)$ we have 
\begin{align*}
F^{(M)} + E^{(1,M)} + \sum_{j=2}^{M} ( j - 1 + \delta ) E^{(i,M)} = \sigma^*(F).
\end{align*}
Thus the pair in the statement of the lemma is the log pullback of the pair
\begin{align*}
\Big( X, \frac{1}{n} \mathcal{M} - \gamma F \Big).
\end{align*}
Hence by Remark \ref{RemLogPull} the pair is not canonical at $\nu$.
\end{proof}

\paragraph*{\bf End of the proof of the Main Theorem}
Suppose $X^{(M)}$ is as in Proposition \ref{PropStopCond}, that is the centre of $\nu$ on $X^{(M)}$ is not a point on the exceptional section of $E^{(M)}$. 
We consider the three possibilities for the centre of $\nu$, and in each case we obtain a contradiction. 

\subsection{Case A}

Suppose the centre $B$ of $\nu$ on $X^{(M)}$ is a fibre of the ruled surface $E^{(M)}$. Then the only divisor in the boundary which contains $B$ is $E^{(M)}$. 
First suppose $M=1$, then the pair
\begin{align*}
\bigg( X^{(1)},
\frac{1}{n} \mathcal{M}^{(1)}
- \Big( \frac{1}{n}(n - \lambda_1 ) + \gamma \Big) E^{(1)} \bigg)
\end{align*}
is not canonical at $\nu$. 
By Lemma \ref{LemVanya}
\begin{align*}
\mult_B Z_1 > 4 n^2 - 4 n \lambda_1 + 4 n^2 \gamma.
\end{align*}
Combining this inequality with Corollary \ref{CorMultBound} we get
\begin{align*}
2 n^2 - 2 \lambda_1^2 + 4 n^2 \gamma > 4 n^2 \frac{n - \lambda_1}{n} + 4 n^2 \gamma
\end{align*}
or, equivalently,
\begin{align*}
0>2n^2-4n\lambda_1+2\lambda_1^2=2(n-\lambda_1)^2,
\end{align*}
which is a contradiction.

Now suppose $M \gem 2$.
By Lemma \ref{LemLogUp}
\begin{align*}
\bigg( X^{(M)},
\frac{1}{n} \mathcal{M}^{(M)} 
- \Big( \frac{1}{n} \sum_{i=2}^{M} ( 2 n - \lambda_1 -  \lambda_i)
+ (M - 1 + \delta) \gamma \Big) E^{(M)} \bigg)
\end{align*}
is not canonical at $\nu$. 
By Lemma \ref{LemVanya} we have
\begin{align*}
\mult_B Z_M > \sum_{i=2}^M ( 8 n^2  - 4 n \lambda_1 - 4 n \lambda_i )
+ 4 (M - 1 + \delta) n^2 \gamma.
\end{align*}
Combining this inequality with Corollary \ref{CorMultBound} we get
\begin{align*}
\sum_{i=2}^M (4 n^2 - 2 \lambda_1^2 - \lambda_i^2 &- 2 \lambda_1 \lambda_i) 
+ 4 (M - 1 + \delta) n^2 \gamma > \\
&> \sum_{i=2}^M ( 8 n^2  - 4 n \lambda_1 - 4 n \lambda_i ) 
+ 4 (M - 1 + \delta) n^2 \gamma.
\end{align*}
or, equivalently,
\begin{align*}
(M - 1) \lambda_1^2 + \sum_{i=2}^M \big( 2 n - \lambda_1 - \lambda_i \big)^2 < 0,
\end{align*}
which is again a contradiction.

\subsection{Case B}
Suppose the centre $B$ of $\nu$ on $X^{(M)}$ is a point which is not on $E^{(M-1,M)}$.
Then the only divisors in the boundary containing $B$ are $E^{(M)}$ and possibly $F^{(M)}$ if $M = 2$. 

First suppose $M = 2$ and $B \in F^{(2)}$, then $\delta = 1$.
Then the pair
\begin{align*}
\bigg( X^{(2)},
\frac{1}{n} \mathcal{M}^{(2)}
- \Big( \frac{1}{n}( 2 n - \lambda_1 - \lambda_2) + 2 \gamma \Big) E^{(2)}
- \gamma F^{(2)} \bigg)
\end{align*}
is not canonical at $\nu$ and the components of $Z_2$ which may contain $B$ are: $Z_h^{(2)}$, $C_v^{(2)}$, $C_h^{(2)}$, and $\Delta^{(2)}$.
By Corti's inequality there are numbers $0 < t, t_F \lem 1$ such that
$$
\mult_B Z_h^{(2)} + t \mult_B \big( C_v^{(2)} + C_h^{(2)} \big) + t_F \mult_B \Delta^{(2)}
\gem 4 n^2 + t ( 8 n^2 - 4 n \lambda_1 - 4n \lambda_2 + 8 n^2 \gamma ) + 4 t_F n^2 \gamma.
$$
On the other hand
$$
\mult_B \Delta^{(2)} \lem \Delta^{(2)} \cdot E^{(2)} = \big( \Delta^{(0)} \cdot L_0 \big)_{F^{(0)}} \leqslant 4 n^2 \gamma
$$
and we already mentioned the bounds on the other cycles.
Combining the bounds we get 
$$
2 n^2 + t ( 4 n^2 - 2 \lambda_1^2 - \lambda_2^2 - 2 \lambda_1 \lambda_2 + 8 n^2 \gamma ) + 4 t_F n^2 \gamma
> 4 n^2 + t ( 8 n^2 - 4 n \lambda_1 + 8 n^2 \gamma ) + 4 t_F n^2 \gamma,
$$
or, equivalently, 
$$
2 n^2 + t \lambda_1 ^2 + t (2 n - \lambda_1 - \lambda_i)^2 < 0,
$$
which is again a contradiction.

Note that the proper transform on $F^{(2)}$ of the half-line $L$ which gave us so much trouble earlier is a part of $C_h^{(2)}$, thus its contribution to multiplicity is bounded.

Suppose $M \gem 2$ and $B \not\in F^{(2)}$.
Then the pair
\begin{align*}
\bigg( X^{(M)},
\frac{1}{n} \mathcal{M}^{(M)} 
- \Big( \frac{1}{n} \sum_{i=2}^{M} ( 2 n - \lambda_1 - \lambda_i)
+ (M - 1 + \delta) \gamma \Big) E^{(M)} \bigg)
\end{align*}
is not canonical at $\nu$ and the components of $Z_M$ which may contain $B$ are $Z_h^{(M)}$, $C_v^{(M)}$, and $C_h^{(M)}$.
By Corti's inequality there is a number $0 < t \lem 1$ such that
$$
\mult_B Z_h^{(M)} + t \mult_B \big( C_v^{(M)} + C_h^{(M)} \big)
\gem 4 n^2 + t \sum_{j=2}^M \big( 8 n^2 - 4 \lambda_1 n - 4 \lambda_i n \big)
+ 4 t (M - 1 + \delta) n^2 \gamma.
$$
Combining this inequality with the bounds from Corollary \ref{CorMultBound} we get
\begin{align*}
 2 n^2 
+ t \sum_{j=2}^M \big( 4 n^2 - 2 \lambda_1^2 &- \lambda_i^2 - 2 \lambda_1\lambda_i  \big)
+ 4 t (M - 1 + \delta) n^2 \gamma > \\
& > 4 n^2 + t \sum_{j=2}^M \big( 8 n^2 - 4 \lambda_1 n - 4 \lambda_i n \big)
+ 4 t (M - 1 + \delta) n^2 \gamma,
\end{align*}
or, equivalently,
$$
2 n^2 + t (M-1) \lambda_1^2 + \sum_{i=2}^M (2 n - \lambda_1 - \lambda_i)^2 < 0,
$$
contradiction.

\subsection{Case C}
Suppose the centre $B$ of $\nu$ on $X^{(M)}$ is a point on the intersection $E^{(M)}\cap E^{(M-1,M)}$. 
These are the only divisors of the boundary containing $B$, that is $B \not\in F^{(M)}$.
Denote $M^- = M - 1$ for compactness of formulae.
The components of $Z_M$ which may contain $B$ are:
$Z_h^{(M)}$, $C_v^{(1,M)}$, $C_h^{(1,M)}$, $C^{(M^-,M)}_h$, and $C^{(M^-,M)}_v$.
Also the pair 
\begin{align*}
\bigg( X^{(M)},
\frac{1}{n} \mathcal{M}^{(M)} 
-  \Big( \frac{1}{n} \sum_{i=2}^{M^-} ( 2 n - \lambda_1 - \lambda_i)&
+ (M^- - 1 + \delta) \gamma \Big) E^{(M^-,M)} - \\
& - \Big( \frac{1}{n} \sum_{i=2}^{M} ( 2 n - \lambda_1 - \lambda_i)
+ (M - 1 + \delta) \gamma \Big) E^{(M)} \bigg)
\end{align*}
is not canonical at $\nu$.
By Corti's inequality there are numbers $0<t,t^-\leqslant 1$ such that
\begin{align*}
\mult_B Z_h^{(M)} + t \mult_B \big( C_v^{(M)}& + C_h^{(M)} \big) 
+ t_- \mult_B \big( C_v^{(M^-,M)} + C_h^{(M^-,M)} \big) \gem \\
\gem 4 n^2 + t \sum_{j=2}^M \big( 8 n^2 - &4 \lambda_1 n - 4 \lambda_i n \big)
+ t^- \sum_{j=2}^{M^-} \big( 8 n^2 - 4 \lambda_1 n - 4 \lambda_i n \big) +\\
&+ 4 t (M - 1 + \delta) n^2 \gamma + 4 t^- (M^- - 1 + \delta) n^2 \gamma.
\end{align*}
By combining the inequality with the bounds from Corollary \ref{CorMultBound} we get
\begin{align*}
& 2 n^2 
+ t \sum_{j=2}^M \big( 4 n^2 - 2 \lambda_1^2 - \lambda_i^2 - 2 \lambda_1\lambda_i  \big)
+ t^- \sum_{j=2}^{M^-} \big( 4 n^2 
- 2 \lambda_1^2 - \lambda_i^2 - 2 \lambda_1\lambda_i  \big) + \\
& + 4 t (M - 1 + \delta) n^2 \gamma + 4 t^- (M^- - 1 + \delta) n^2 \gamma > \\
& > 4 n^2 + t \sum_{j=2}^M \big( 8 n^2 - 4 \lambda_1 n - 4 \lambda_i n \big)
+ t^- \sum_{j=2}^{M^-} \big( 8 n^2 - 4 \lambda_1 n - 4 \lambda_i n \big) + \\
&+ 4 t (M - 1 + \delta) n^2 \gamma + 4 t^- (M^- - 1 + \delta) n^2 \gamma,
\end{align*}
or, equivalently,
$$
2 n^2 + t (M - 1) \lambda_1^2 + t \sum_{i=2}^M (2 n - \lambda_1 - \lambda_i)^2
+ t^- (M^- - 1) \lambda_1^2 + t^- \sum_{i=2}^{M^-} (2 n - \lambda_1 - \lambda_i)^2 < 0,
$$
which is a contradiction. 

And this ends the process of exclusion of points in fibres containing singular points of $X$, and therefore the proof of the Main Theorem is complete.\hfill$\square$

\appendix
\section{Explicit computations on $K^2$-condition}\label{K2cond}

\subsection{On preservation of the $K^2$-condition}
Here we prove a result in favour of $K^2$-condition being preserved in the running of the Sarkisov links at singular points of $X$.

Throughout this section we assume that $\pi: X \to \MP^1$ is a del Pezzo fibration and that $X$ is a general quasi-smooth hypersurface of degree $(4,\ell)$ in a $\MP(1,1,1,2)$-bundle $T = \MP(1,1,1,2)_{(0,a,b,c)}$.
We may assume $0 \lem a \lem b$ without loss of generality, and allow $c$ to take value in $\mathbb{Z}$.
Let $u,v$ be the coordinates on the base $\MP^1$ and let $x,y,z,w$ be the coordinates on the fibre.

Denote by $M_T$ the divisor with the equation $\{x = 0\}$ and let $M = M_T \big\vert_X$.
Denote by $F$ a fibre of $\pi$ on $X$ and let $F_T$ be a fibre of the $\MP(1,1,1,2)$-bundle.
Thus we have that $X \sim 4 M_T + \ell F_T$.
Clearly, for the cone of effective divisors on the toric level we have that
\[ \begin{array}{ll}
\ON{Eff}^1(T) = \langle M_T, F_T \rangle&\text{if } c\geq 0,\\
\end{array}\]
By the choice of $T$ we have $K_T = -5 M_T - (2 + a + b + c) F_T$ and by adjunction 
$$K_X = - M + (\ell - 2 - a - b - c) F.$$
The following relations for the intersection numbers are rather easy to calculate.

\begin{Lemma} \label{ToricInt}
With notation as above, we have that
\begin{align*}
& M_T^3 \cdot F_T = 1/2, \\
& 2 M_T^4 = - a - b - c/2,\\
& M^2 \cdot F = M_T^2 \cdot F_T \cdot X = 2,\\
& M^3 = M_T^3 \cdot X = \ell/2 - 2 a - 2 b - c.
\end{align*}
\end{Lemma}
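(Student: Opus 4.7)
The plan is to read off all four intersection numbers from the toric data of $T$ and then pass to $X$ by adjunction. First I would record the linear equivalences among the toric boundary divisors $D_u,D_v,D_x,D_y,D_z,D_w$ dictated by the columns of the weight matrix,
\[
D_u \sim D_v \sim F_T,\quad D_x \sim M_T,\quad D_y \sim M_T + aF_T,\quad D_z \sim M_T + bF_T,\quad D_w \sim 2M_T + cF_T,
\]
and write down the two Stanley--Reisner relations coming from the irrelevant ideal $(u,v)\cap(x,y,z,w)$, namely $D_u \cdot D_v = 0$ and $D_x \cdot D_y \cdot D_z \cdot D_w = 0$ in the Chow ring of $T$.

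The first relation gives $F_T^2 = 0$ at once. For $M_T^3 \cdot F_T$ I would restrict to a fibre of $T \to \MP^1$: such a fibre is $\MP(1,1,1,2)$, and $M_T$ restricts to the tautological $\mathcal{O}(1)$, whose top self-intersection on that weighted projective space is $1/(1\cdot 1\cdot 1\cdot 2) = 1/2$. For $M_T^4$ I would expand $D_x\cdot D_y\cdot D_z\cdot D_w = 0$; every term containing $F_T$ to order $\geq 2$ vanishes by $F_T^2 = 0$, which leaves
\[
0 \;=\; M_T(M_T + aF_T)(M_T + bF_T)(2M_T + cF_T) \;=\; 2M_T^4 + (2a+2b+c)\,M_T^3 \cdot F_T,
\]
and substituting $M_T^3 \cdot F_T = 1/2$ gives $2M_T^4 = -a - b - c/2$.

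With the two base intersection numbers in hand, the numbers on $X$ follow by writing $X \sim 4M_T + \ell F_T$ and expanding:
\[
M^2 \cdot F \;=\; M_T^2 \cdot F_T \cdot (4M_T + \ell F_T) \;=\; 4 \cdot \tfrac{1}{2} + \ell \cdot 0 \;=\; 2,
\]
\[
M^3 \;=\; M_T^3 \cdot (4M_T + \ell F_T) \;=\; 4M_T^4 + \ell\,M_T^3 \cdot F_T \;=\; 2(-a - b - c/2) + \ell/2 \;=\; \ell/2 - 2a - 2b - c.
\]
There is no real obstacle here, since everything is mechanical once the Stanley--Reisner data is set up; the only point worth double-checking is that $M_T$ does restrict to $\mathcal{O}(1)$ on the weighted projective fibre, which is transparent from the weight matrix after specialising $u$ (or $v$) to a nonzero constant.
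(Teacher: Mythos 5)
Your proposal is correct and takes essentially the same approach as the paper: the paper's phrase ``the intersection $\{x=0\}\cap\{y=0\}\cap\{z=0\}\cap\{w=0\}$ is empty'' is exactly the Stanley--Reisner relation $D_x\cdot D_y\cdot D_z\cdot D_w=0$ that you expand, and the remaining two numbers are obtained by intersecting with $X\sim 4M_T+\ell F_T$ as you do. You have simply written out the computation that the paper leaves implicit.
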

\begin{proof}
The first equality is just computation on $\MP(1,1,1,2)$.
The second equality follows from the fact that the intersection 
$\{x=0\} \cap \{y=0\} \cap \{z=0\} \cap \{w=0\}$ is empty.
The rest is computed by intersection with $X$.
\end{proof}

\begin{Prop} \label{keeping-scroll}
Let $\pi\colon X \to \MP^1$ be a del Pezzo fibration of degree $2$.
Suppose $X$ is a quasi-smooth hypersurface of bi-degree $(4,\ell)$ in a $\MP(1,1,1,2)$-bundle $T$ over $\MP^1$.
Suppose $c \gem 2b$, then for any $I$ the variety $X_I$ is also a hypersurface in $T$ of bi-degree $(4,\ell)$.
\end{Prop}
\begin{proof}
It is enough to prove the statement of the proposition for $X^\prime = X_{\{i\}}$, then the general case follows for any $I$ by induction.

In Subsection~\ref{square-models} we represented $X^\prime$ as a complete intersection in $V^\prime$ a $\MP(1,1,1,2,2)$-bundle over $\MP^1$.
We change the coordinates in $V^\prime$ to have a more convenient first equation.

Suppose the singularity for which we constructed a Sarkisov link is given by $x=y=z=u=0$.
Then we can decompose the coefficient at $w^2$ in the following way
$$
f(u,v) = u v^{N-1} + u^2 g(u,v).
$$
The equations of $X^\prime$ in $V^\prime$ are
\begin{align*}
u s =  (v^{N-1} + u g(u,v)) w + q(x,y,z;u,v)
\quad\text{and}\quad
s w + r(x,y,z;u,v) = 0
\end{align*}
Let us change the coordinates $s = s_{\text{new}} + g(u,v) w$.
Then the new equations are
\begin{align*}
u s =  v^{N-1} w + q(x,y,z;u,v),
\quad\text{and}\quad
s w + g(u,v) w^2 + r(x,y,z;u,v) = 0.
\end{align*}
Since $c \gem 2b$ we may also decompose 
$$
q(x,y,z;u,v) = u q_1(x,y,z;u,v) + v^{N-1} q_2(x,y,z;u,v).
$$
Indeed, the degree of the first equation is $(2,c+N)$, thus
$$
q \in \langle u,v \rangle^{c+N-2b} \subset \langle u,v \rangle^N,
$$
where $\langle u,v \rangle$ is the ideal generated by $u,v$.
Then we change coordinates 
\begin{align*}
&s = s_{\text{new}} + q_1(x,y,z;u,v),\\
&w = w_{\text{new}} - q_2(x,y,z;u,v).
\end{align*}
The new first equation is $u s =  v^{N-1} w$.
Clearly this equation defines a $\MP(1,1,1,2)$-bundle over $\MP^1$.
Let $(u_T,v_T,x_T,y_Y,z_T,w_T)$ be the coordinates on $T$.
Consider a map from $T$ into $V^\prime$ defined as follows
\begin{align*}
&u = u_T, \quad v = v_T, \quad x = x_T, \quad y = y_T, \quad z = z_T,\\
&w = u w_T, \quad s = v^{N-1} w_T.
\end{align*}
It is easy to see that this map is well defined and gives an embedding of $T$ into $V^\prime$.
The equation of $T$ in $V^\prime$ is $u s = v^{N-1} w$, that is the first equation of $X^\prime$.
Thus, we have $X^\prime$ embedded into $T$, substituting coordinates of $T$ into the second equation we conclude that the equation of $X^\prime$ has degree $(4,\ell)$ that is the same degree that the equation of $X$ in $T$.
\end{proof}

\begin{Remark}
Let $X$ be a in Proposition \ref{keeping-scroll} and suppose $X$ satisfies the $K^2$-condition.
Then all $X_I$ live in the same family of hypersurfaces by Proposition \ref{keeping-scroll}.
Yet, we cannot say whether $X_I$ also satisfies the $K^2$-condition.

The general expectation is that $K^2$-condition is an open property in the moduli. Given this, let $U$ be an open subset in a given family for which the $K^2$-condition is satisfied. Then by Proposition \ref{keeping-scroll}, $X$ satisfies the total $K^2$-condition if and only if $X_I \in U$ for all $I \subset \{1,\dots,N\}$, which suggests that $K^2$-total condition must also be open in moduli.
\end{Remark}

\subsection{Sufficient conditions for $K^2$-total condition}

\begin{Lemma} \label{sameint}
Let $D_1, D_2, D_3$ be divisors on $X$. Fix $I \subset \{1,\dots,N\}$ and let $G_1, G_2, G_3$ be divisors on $X_I$ of the same bi-degrees.
Then $D_1 \cdot D_2 \cdot D_3 = G_1 \cdot G_2 \cdot G_3$.
\end{Lemma}
\begin{proof}
It is enough to check it for the case of $\deg G_1 = \deg G_2 = (1,0)$ and $\deg G_3 = (1,0)$ or $\deg G_3 = (0,1)$.
We are going to compute the intersections on the ambient scroll $V_{\big| I \big|}$.
Let $M_{\big| I \big|}$ be the divisor given by $x=0$ and let $F_{\big| I \big|}$ be a fibre of the projection onto $\MP^1_{u,v}$.
Then similar to Lemma \ref{ToricInt}, we compute that $M_{\big| I \big|}^4 \cdot F_{\big| I \big|} = 1/4$ and 
$4 M_I^5 = - a - b - \ell/2$.
Now suppose $\deg G_3 = (0,1)$, then
$$
G_1 \cdot G_2 \cdot G_3 = M_{\big| I \big|}^2 \cdot F_{\big| I \big|} \cdot (2,\ell-c) \cdot (4,\ell) = 2 .
$$
If instead $\deg G_3 = (1,0)$, then
$$
G_1 \cdot G_2 \cdot G_3 = M_{\big| I \big|}^3\cdot (2,\ell-c) \cdot (4,\ell) = \ell/2 - 2a - 2b - c.
$$
These intersections coincide with the ones we computed on $X$ in Lemma \ref{ToricInt}
\end{proof}

\begin{Lemma} \label{defnef}
For $X_I$ for any $I$ we have the following.
Let $M_I$ denote the divisor given by $x=0$ and let $F_I$ be a fibre of $\pi_I$.
Then the divisor $M_I + b F_I$ is nef.
\end{Lemma}
\begin{proof}
The base locus of $\big| M_I + b F_I \big|$ is contained in $\{x=y=z=0\}$.
On the other hand
$$
\{x=y=z=0\} = \{ \prod_{i\in I} l_i = w = 0 \} \cup \{ \prod_{j\not\in I} l_j = s = 0 \} = \ON{Sing} X_I,
$$
which is of cardinality $N$.
Thus the base locus of $\big| M_I + b F_I \big|$ is finite hence $M_I + b F_I$ is nef.
\end{proof}

\begin{proof}[Proof of Proposition~\ref{PrK2}]
First, let us compute $K_{X_I}$.
By adjunction we have
\begin{align*}
\deg K_{X_I} = \deg K_{V_{\big| I \big|}} + (2,c+N) + &(4,2c+N) = \\
&=\Big(-1, 3c + 2N - 2 
- a - b  - \big( c + \big|I\big| \big) - (c + N - \big| I \big|) \Big).
\end{align*}
Since $d = 2c + N$ we have
$$
\deg K_{X_I} = (-1, \ell - 2 - a - b - c) = \deg K_X.
$$
If the intersection of $K_{X_I}^2$ with a nef divisor is non positive, then $K_{X_I}^2$ is not in the interior of the Mori cone.
By Lemma \ref{defnef} the divisor $D_I = M_I + b F_I$ is nef and by Lemma \ref{sameint} the intersection of $K_{X_I}^2$ with $D_I$ is the same on all models $X_I$, thus it is enough to compute it on $X$.

We compute
\begin{align*}
K_X^2 \cdot D = (-1, \ell - a - b - c - 2)^2 \cdot (1,b) 
&= M^3 + (4 + 2a + 2b + 2c - 2 \ell + b) F \cdot M^2 =\\
&= 8 + 2a + 4b + 3c - \frac{7}{2} \ell.
\end{align*}
If this value is non-positive all $X_I$ satisfy the $K^2$-condition.
This finishes the proof.
\end{proof}

\begin{proof}[Proof of Corollary~\ref{K^2-families}]
By assumption $c \gem 2b$ thus Proposition \ref{PrK2} and the bounds $0\lem 4a \lem 4b \lem 2c \lem \ell$ imply that
\begin{align*}
a \lem 4, \quad 2b\lem 4 + a, \quad, \text{and}~2c\lem 4 + a + 2b.
\end{align*}
Using these inequalities we fill the table.
\end{proof}


\begin{thebibliography}{10}

\bibitem{Ahm}
H.~Ahmadinezhad, \emph{On del {P}ezzo fibrations that are not birationally
  rigid}, J. Lond. Math. Soc. (2) \textbf{86} (2012), no.~1, 36--62.

\bibitem{Hamid-example}
\bysame, \emph{Singular del {P}ezzo fibrations and birational rigidity},
  Automorphisms in birational and affine geometry, Springer Proc. Math. Stat.,
  vol.~79, Springer, Cham, 2014, pp.~3--15.

\bibitem{Ahm-Cr}
\bysame, \emph{On pliability of del {P}ezzo fibrations and {C}ox rings}, J.
  Reine Angew. Math. \textbf{723} (2017), 101--125.

\bibitem{AO}
H.~Ahmadinezhad and T.~Okada, \emph{Stable rationality of higher dimensional
  conic bundles}, \'{E}pijournal Geom. Alg\'{e}brique \textbf{2} (2018), Art.
  5, 13.

\bibitem{AZ}
H.~Ahmadinezhad and F.~Zucconi, \emph{Mori dream spaces and birational rigidity
  of {F}ano 3-folds}, Adv. Math. \textbf{292} (2016), 410--445.

\bibitem{AO-BR}
T.~Ahmadinezhad, H.~Okada, \emph{Birationally rigid {P}faffian {F}ano 3-folds},
  Algebr. Geom. \textbf{5} (2018), no.~2, 160--199.

\bibitem{Alekseev}
V.~Alekseev, \emph{On conditions for the rationality of three-folds with a
  pencil of del {P}ezzo surfaces of degree {$4$}}, Mat. Zametki \textbf{41}
  (1987), no.~5, 724--730, 766.

\bibitem{BB}
C.~B\"{o}hning and H-C. Graf~von Bothmer, \emph{On stable rationality of some
  conic bundles and moduli spaces of {P}rym curves}, Comment. Math. Helv.
  \textbf{93} (2018), no.~1, 133--155.

\bibitem{inequalities}
I.~Cheltsov, \emph{Two local inequalities}, Izv. Ross. Akad. Nauk Ser. Mat.
  \textbf{78} (2014), no.~2, 167--224.

\bibitem{CT}
J.-L. Colliot-Th\'el\`ene, \emph{Arithm\'etique des vari\'et\'es rationelles et
  probl\'emes birationnels}, Proc.\ Int.\ Conf.\ Math. (1986), 641--653.

\bibitem{Corti}
A.\ Corti, \emph{Singularities of linear systems and {$3$}-fold birational
  geometry}, Explicit birational geometry of 3-folds, London Math. Soc. Lecture
  Note Ser., vol. 281, Cambridge Univ. Press, Cambridge, 2000, pp.~259--312.

\bibitem{Corti-Mella}
A.~Corti and M.~Mella, \emph{Birational geometry of terminal quartic 3-folds.
  {I}}, Amer. J. Math. \textbf{126} (2004), no.~4, 739--761.

\bibitem{Grin2}
M.\ Grinenko, \emph{Birational properties of pencils of del {P}ezzo surfaces of
  degrees 1 and 2}, Mat. Sb. \textbf{191} (2000), no.~5, 17--38.

\bibitem{Grin1}
\bysame, \emph{On the birational rigidity of some pencils of del {P}ezzo
  surfaces}, J. Math. Sci. (New York) \textbf{102} (2000), no.~2, 3933--3937,
  Algebraic geometry, 10.

\bibitem{Grin3}
\bysame, \emph{Birational properties of pencils of del {P}ezzo surfaces of
  degrees 1 and 2. {II}}, Mat. Sb. \textbf{194} (2003), no.~5, 31--60.

\bibitem{HT}
B.~Hassett, , and Y.~Tschinkel, \emph{On stable rationality of {F}ano
  threefolds and del {P}ezzo fibrations}, J. Reine Angew. Math. \textbf{751}
  (2019), 275--287.

\bibitem{HKT}
B.~Hassett, A.~Kresch, and Y.~Tschinkel, \emph{Stable rationality and conic
  bundles}, Math. Ann. \textbf{365} (2016), no.~3-4, 1201--1217.

\bibitem{Quartic}
V.~Iskovskih and J.~Manin, \emph{Three-dimensional quartics and counterexamples
  to the {L}\"uroth problem}, Mat. Sb. (N.S.) \textbf{86(128)} (1971),
  140--166.

\bibitem{IPFano}
V.~Iskovskikh and Y.\ Prokhorov, \emph{Algebraic geometry {V}: {F}ano
  varieties}, Springer, 1999, A.N.\ Parshin and I.R.\ Shafarevich (Eds.).

\bibitem{Kaw}
Y.~Kawamata, \emph{Divisorial contractions to {$3$}-dimensional terminal
  quotient singularities}, Higher-dimensional complex varieties ({T}rento,
  1994), de Gruyter, Berlin, 1996, pp.~241--246.

\bibitem{Kawamata}
Y.\ Kawamata, \emph{Divisorial contractions to {$3$}-dimensional terminal
  quotient singularities}, Higher-dimensional complex varieties ({T}rento,
  1994), de Gruyter, Berlin, 1996, pp.~241--246.

\bibitem{Kr16}
I.~Krylov, \emph{Birational geometry of del {P}ezzo fibrations with terminal
  quotient singularities}, J. Lond. Math. Soc. (2) \textbf{97} (2018), no.~2,
  222--246.

\bibitem{KO}
I.~Krylov and T.~Okada, \emph{Stable {R}ationality of del {P}ezzo {F}ibrations
  of {L}ow {D}egree {O}ver {P}rojective {S}paces}, Int. Math. Res. Not. IMRN
  (2020), no.~23, 9075--9119.

\bibitem{Prokh-Kuz}
A.~Kuznetsov and Y.~Prokhorov, \emph{Prime {F}ano threefolds of genus 12 with a
  {$\Bbb G_{\rm m}$}-action and their automorphisms}, \'{E}pijournal Geom.
  Alg\'{e}brique \textbf{2} (2018), Art. 3, 14.

\bibitem{Manin}
J.~Manin, \emph{Rational surfaces over perfect fields}, Inst. Hautes \'Etudes
  Sci. Publ. Math. (1966), no.~30, 55--113.

\bibitem{Okada}
T.~Okada, \emph{Stable rationality of orbifold {F}ano 3-fold hypersurfaces}, J.
  Algebraic Geom. \textbf{28} (2019), no.~1, 99--138.

\bibitem{Okada18}
\bysame, \emph{On birational rigidity of singular del {P}ezzo fibrations of
  degree 1}, J. Lond. Math. Soc. (2) \textbf{102} (2020), no.~1, 1--21.

\bibitem{Valuations}
A.~Pukhlikov, \emph{A remark on the theorem of {V}. {A}.\ {I}skovskikh and
  {Y}u.\ {I}.\ {M}anin on a three-dimensional quartic}, Trudy Mat. Inst.
  Steklov. \textbf{208} (1995), no.~Teor. Chisel, Algebra i Algebr. Geom.,
  278--289.

\bibitem{Pukh123}
\bysame, \emph{Birational automorphisms of three-dimensional algebraic
  varieties with a pencil of del {P}ezzo surfaces}, Izv. Ross. Akad. Nauk Ser.
  Mat. \textbf{62} (1998), no.~1, 123--164.

\bibitem{Pukh-MM}
\bysame, \emph{Essentials of the method of maximal singularities}, Explicit
  birational geometry of 3-folds, London Math. Soc. Lecture Note Ser., vol.
  281, Cambridge Univ. Press, Cambridge, 2000, pp.~73--100.

\bibitem{YPG}
M.~Reid, \emph{Young person's guide to canonical singularities}, Algebraic
  geometry, {B}owdoin, 1985 ({B}runswick, {M}aine, 1985), Proc. Sympos. Pure
  Math., vol.~46, Amer. Math. Soc., Providence, RI, 1987, pp.~345--414.

\bibitem{Sar-CB}
V.~Sarkisov, \emph{Birational automorphisms of conic bundles}, Izv. Akad. Nauk
  Ser. Mat. \textbf{44} (1980), no.~4, 918--945, 974.

\bibitem{Shokurov-Choi}
V.~Shokurov and S.~Choi, \emph{Geography of log models: theory and
  applications}, Cent. Eur. J. Math. \textbf{9} (2011), no.~3, 489--534.

\bibitem{Sobolev}
I.~V. Sobolev, \emph{Birational automorphisms of a class of varieties fibered
  by cubic surfaces}, Izv. Ross. Akad. Nauk Ser. Mat. \textbf{66} (2002),
  no.~1, 203--224.

\end{thebibliography}

\providecommand{\bysame}{\leavevmode\hbox to3em{\hrulefill}\thinspace}
\providecommand{\MR}{\relax\ifhmode\unskip\space\fi MR }
\providecommand{\MRhref}[2]{%
  \href{http://www.ams.org/mathscinet-getitem?mr=#1}{#2}
}
\providecommand{\href}[2]{#2}

\end{document}